\documentclass[a4paper,11pt]{article}
\usepackage{amssymb}
\usepackage{enumerate,verbatim}
\usepackage{amsmath}
\usepackage{amsfonts}
\usepackage{amsthm}
\usepackage{xcolor}
\usepackage{epsfig}
\usepackage[normalem]{ulem}
\usepackage[utf8]{inputenc} 
\usepackage{nicefrac} 

\newtheorem{theorem}{Theorem}
\newtheorem{remark}{Remark}
\newtheorem{definition}{Definition}
\newtheorem{example}{Example}

\newtheorem{lemma}{Lemma}
\newtheorem{proposition}{Proposition}

\newcommand{\C}{\mathbb{C}}

\newcommand{\Z}{\mathbb{Z}}

\def\eqref#1{(\ref{#1})}

\begin{document}
\date{}
\author{
Colin Christopher\\
School of Engineering, Computing and Mathematics\\Plymouth University\\
Plymouth, PL4 8AA, UK\\
\tt{C.Christopher@plymouth.ac.uk}\\
{}\\
Chara Pantazi \\
Departament de Matem\`atiques\\ Universitat Polit\`ecnica de Catalunya (EPSEB)\\
Av. Doctor Maranon, 44-50, 08028, Barcelona, Spain\\
\tt{chara.pantazi@upc.edu}\\
{}\\
Sebastian Walcher\\
Fachgruppe Mathematik, RWTH Aachen\\
52056 Aachen, Germany\\
\tt{walcher@mathga.rwth-aachen.de}
}
\title{{Liouvillian integrability\\ of rational vector fields: \\The case of algebraic extensions}}
\maketitle

\begin{abstract}
As shown in a previous paper, whenever a rational vector field on $\mathbb C^n$, $n>2$, is Liouvillian integrable, then it admits a first integral obtained by two successive integrations from a one-form with coefficients in a finite algebraic extension $L$ of the rational function field $K$. In the present work we discuss and characterize exceptional vector fields in this class, for which -- by definition -- the choice $L=K$ is not possible. In particular we show that exceptional vector field exist, giving explicit constructions in dimension three.\\
 MSC (2020): 34A99, 34M15, 34M50, 12H05, 12F10.\\
  Key words: Differential form, Liouvillian function, group representation, Picard-Vessiot extension, Platonic solids.
\end{abstract}

\section{Introduction}

 This paper continues our study \cite{ACPW} of Liouvillian integrability, of $n$-dimensional rational vector fields over $\mathbb C$ (in other words, of rational  $(n-1)$-forms in $n$ variables). In \cite{ACPW} it was proven that Liouvillian integrability always implies the existence of a first integral that is obtained by two successive integrations from a one-form with coefficients in a finite algebraic extension $L$ of the rational function field $K=\mathbb C(x_1,\ldots,x_n)$. This result extends a classical theorem due to Singer \cite{Singer1992} for $n=2$, where one may take $L=K$. But in \cite{ACPW} we left open the question whether there actually exist exceptional cases, i.e., Liouvillian integrable vector fields (in dimension greater than two) for which necessarily $L\not=K$. The goal of the present paper is to answer this question. Clearly one may assume here that $L$ is Galois over $K$ (see also \cite{ACPW}).\\
After some preliminaries, and noting some classes of regular (i.e. non-exceptional) rational vector fields on $\mathbb C^n$, we proceed to determine properties of exceptional vector fields. We start from a distinction whether (i) there exists a first integral obtained by integrating a closed rational one-form, or  (ii) no such integral exists. We then show that exceptional cases may be subdivided in two types. The first type (appearing in case (ii) only) is built from Galois extensions of degree less than $n$, with the Galois group acting as a permutation group on the integrals. (In dimension three, one thus has quadratic Galois extensions.) For the second type the Galois group admits a faithful representation in $GL(m,\mathbb C)$, with some $m$, $1<m<n$. (In particular, when $n=3$, then the group has a faithful representation in $GL(2,\mathbb C)$.)\\
In the final section we construct exceptional rational vector fields in dimension three, using Picard-Vessiot extensions with dihedral Galois groups, resp.\ with Galois groups that are isomorphic to symmetry groups of the Platonic solids.

\section{Background and statement of the problem}

We will refer frequently to notions and results from \cite{ACPW}. In particular we will freely use differential forms over the rational function field  $K:=\mathbb C(x_1,\ldots,x_n)$ and its extension fields. For any extension $L$ we denote by $L'$ the space of one-forms over $L$. Note that $L'$ is a vector space over $L$, of dimension $n-1$.

For preparation and as a reminder, we collect some notions and facts from \cite{ACPW}:

\begin{itemize}
\item A differential extension field $L$ of $K=\mathbb C(x_1,\ldots,x_n)$, is {\it Liouvillian} if and only if $K$ and $L$ have the same constants
 and there exists a tower of fields of the form
\begin{equation}\label{LiouvExeq} K=K_0 \subset K_1 \subset \ldots \subset K_m=L, \end{equation}
such that for each $i\in\{0,\ldots,m-1\}$ we have one of the following (see \cite{ACPW}, Definition 1 and Remark 1):
\begin{list}{}{}
\item{(i)} $K_{i+1}=K_i(t_i)$, where $t_i\not=0$ and $dt_i=\delta_i t_i$ with some $\delta_i \in K'_i$ (necessarily $d\delta_i=0$).
\item{(ii)} $K_{i+1}=K_i(t_i)$, where $dt_i=\delta_i$ with $\delta_i \in K'_i$  (necessarily $d\delta_i=0$).
\item{(iii)} $K_{i+1}$ is a finite algebraic extension of $K_i$.
\end{list}
\item As  noted in \cite{ACPW}, Remark 1, the condition on the constants is unproblematic. In particular, any finite algebraic extension of $K$, with a natural extension of differentials, is Liouvillian.
\end{itemize}

We consider an $n$-dimensional rational vector field
\begin{equation}\label{vfieldnd}
\mathcal{X}=\sum_{i=1}^nP_i\frac{\partial }{\partial x_i}
\end{equation}
on $\mathbb C^n$, $n\geq2$; equivalently an ($n-1$)-form
\begin{equation}\label{2form}
\Omega=\sum_{i=1}^nP_i\,dx_1\wedge\cdots\wedge\widehat{dx_i}\wedge\cdots dx_n
\end{equation}
defined over $K=\mathbb C(x_1,\ldots,x_n)$. 

\begin{definition}\label{dLf1}\textup{
A non-constant
element, $\phi$, of a Liouvillian extension of $K$ is called a {\it Liouvillian first integral}  of the vector field $\mathcal{X}$ if it satisfies $\mathcal{X} \phi=0$ or, equivalently, $d\phi \wedge  \Omega=0$.}
\end{definition}
\begin{remark}\label{defcondrem}\textup{ By \cite{ACPW}, Remark 4,  according to Definition \ref{dLf1}, a Liouvillian first integral  exists if and only if there exist some Liouvillian extension $L$ of $K$ and one-forms $\omega\in L'\setminus\{0\}$, $\alpha\in L'$ such that 
\begin{equation}\label{probsetup}
    \omega\wedge\Omega=0,\quad d\omega=\alpha\wedge\omega, \quad d\alpha=0.
\end{equation}
Given this situation, we will briefly say that the {\it 2--form $\Omega$ is Liouvillian integrable} and, slightly abusing language, we will say specifically that {\em $\Omega$ is Liouvillian integrable over $L$}.  }
\end{remark}

 In order to properly frame the problem, we introduce more notation.
\begin{definition}\label{workdef}
    Given a differential field extension $L$ of $K$, denote by $\mathcal I =\mathcal{I}(\Omega)$ the set of all $0\not=\omega\in L'$ for which there exists $\alpha\in L'$ so that
\eqref{probsetup}
   is satisfied.
\end{definition}
Assume that we have $\mathcal I\not=\emptyset$ for given $\Omega$ and some Liouvilian extension $L$ of $K$. We wish to  characterize {the} ``smallest possible'' extensions $\widetilde L\subseteq L$  of $K$ such that $\mathcal I\cap \widetilde L'\not=\emptyset$. 

By Singer's classical result \cite{Singer1992} for two dimensional vector fields, one has $\widetilde L=K$ in the case $n=2$. For $n>2$, a  principal result of our previous work (\cite{ACPW}, Theorem 4) may be stated as follows.

\begin{theorem}\label{algaddendum}
Let $K = \C(x_1,\ldots,x_n)$, and let $\Omega$ be the $(n-1)$-form {\eqref{2form}} over $K$. If there exists a Liouvillian first integral of $\Omega$, then there exists a first integral that is defined over a finite algebraic extension $L$ of $K$; thus 
there exist 1--forms $\omega,\, \alpha \in L'$ such that \eqref{probsetup} holds.
\end{theorem}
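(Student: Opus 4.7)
My plan is to induct on the length $m$ of a Liouvillian tower $K=K_0\subset K_1\subset\cdots\subset K_m=L$ over which $\mathcal I(\Omega)\neq\emptyset$, peeling off transcendental steps of types (i) and (ii) one at a time until only algebraic extensions remain above $K$. The base case (all steps algebraic, or $m=0$) is immediate. For the inductive step, take the topmost transcendental step $K_i\subset K_{i+1}$: one must show that a valid pair $(\omega,\alpha)\in L'\times L'$ satisfying \eqref{probsetup} forces the existence of a valid pair over a finite algebraic extension of $K_i$.

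First, I would absorb the algebraic tail $K_{i+1}\subset L$ by Galois descent. The subtlety is that $\alpha$ is not uniquely determined by $\omega$; under $\omega\mapsto f\omega$ one has $\alpha\mapsto\alpha-df/f$, so naive averaging of the pair makes no sense. Instead, one works projectively: every Galois conjugate $\sigma\omega$ lies in the $K$-subspace cut out by $\omega\wedge\Omega=0$ and is integrable with a closed companion, and a Hilbert 90 style cocycle trivialization realises $\omega=f\,\omega_{i+1}$ with $\omega_{i+1}\in K_{i+1}'$ and $f\in L^{\ast}$; the modified companion $\alpha-df/f$ then lies in $K_{i+1}'$ by Galois invariance and remains closed. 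Having descended to $K_{i+1}'$, turn to the transcendental step and exploit the one-parameter group of $K_i$-automorphisms of $K_{i+1}$: $\sigma_c\colon t\mapsto ct$ for type (i) with $dt=\delta t$, or $\sigma_c\colon t\mapsto t+c$ for type (ii) with $dt=\delta$, always with $\delta\in K_i'$ closed.

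Expand $\omega$ and $\alpha$ as finite rational expressions in $t$ with coefficients in $K_i'$, and after handling poles at finite $t$-values by partial fractions (each pole is treated in parallel), reduce to the case of a Laurent polynomial expansion $\omega=\sum_j t^j\omega_j$ and $\alpha=\sum_k t^k\alpha_k$ with $\omega_j,\alpha_k\in K_i'$. Using $d(t^j\omega_j)=j\,t^j\delta\wedge\omega_j+t^j d\omega_j$ in case (i) (respectively $d(t^j\omega_j)=j\,t^{j-1}\delta\wedge\omega_j+t^j d\omega_j$ in case (ii)), a degree comparison in $d\omega=\alpha\wedge\omega$ forces $\alpha$ to have top $t$-degree zero; matching leading coefficients then yields $d\omega_N=\alpha'\wedge\omega_N$ with $\alpha'=\alpha_0-N\delta\in K_i'$ in case (i), and an analogous formula in case (ii). Closedness of $\alpha'$ follows since $d\alpha=0$ implies $d\alpha_0=0$ and $d\delta=0$ by hypothesis; the wedge condition $\omega_N\wedge\Omega=0$ descends coefficientwise. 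Hence $(\omega_N,\alpha')$ is a valid pair in $K_i'\times K_i'$.

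The principal obstacle I anticipate lies in the algebraic-tail descent: the Hilbert 90 argument must be set up so that the produced $\omega_{i+1}$ is genuinely nonzero, and $\alpha-df/f$ must be verified to lie in $K_{i+1}'$; this requires a careful analysis of the Galois-twisted action on pairs, and may necessitate a preliminary twist of $\omega$ by an algebraic function to avoid trivialities. The transcendental step is essentially formal once leading coefficient extraction is carried out, modulo localization at finite poles. Iterating the two descents finitely many times produces the desired valid pair over a finite algebraic extension of $K$, as claimed.
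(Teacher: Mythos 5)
Your overall strategy (induct on the tower and peel off the topmost transcendental step by extracting a leading coefficient in $t$) is the right skeleton, but the step you flag as ``the principal obstacle'' is not merely delicate --- it is false in general, and its failure is the very subject of this paper. You propose to absorb the algebraic tail $K_{i+1}\subset L$ by a Hilbert~90 trivialization producing $\omega=f\,\omega_{i+1}$ with $\omega_{i+1}\in K_{i+1}'$. Hilbert~90 is applicable only when the Galois orbit of $\omega$ stays inside the line $L\,\omega$, i.e.\ when $\sigma(\omega)\wedge\omega=0$ for all $\sigma$; that is the case $\dim_L V=1$ of Proposition~\ref{boreprop}, where one in fact gets $\sigma(\omega)=\omega$ outright after the normalization \eqref{blankx1}, with no cocycle needed. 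When the orbit spans a space of dimension greater than one there is no $L^*$-valued cocycle to trivialize, and descent through the algebraic extension genuinely fails: Proposition~\ref{excprop} constructs pairs $(\Omega,L)$ with $L$ finite Galois over $K$, $\mathcal I\cap L'\neq\emptyset$ and $\mathcal I\cap K'=\emptyset$. This is precisely why Theorem~\ref{algaddendum} asserts descent only to a finite algebraic extension of $K$ and not to $K$ itself; an argument that routes through ``first descend the algebraic tail to $K_{i+1}$'' would, applied at the bottom of the tower, prove that every Liouvillian integrable $\Omega$ is regular, contradicting Section~5.

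The repair is to avoid ever descending inside an algebraic extension. For the topmost transcendental step $K_i\subset K_i(t)$ with $L$ finite algebraic over $K_i(t)$, work with a place of $L$ lying over the valuation of $K_i(t)$ at $t=\infty$ (or at a finite $t$-value): its residue field is a finite algebraic extension $F$ of $K_i$, and the leading-coefficient extraction you describe, carried out with respect to this valuation after normalizing $\alpha$ modulo logarithmic derivatives (using $d\alpha=0$ to control its polar part), lands the new pair in $F'$ rather than in $K_i'$. Iterating reduces the number of transcendental steps while only enlarging the algebraic part, which is exactly the shape of the conclusion. Two further remarks: your purely transcendental computation ($\alpha'=\alpha_0-N\delta$, coefficientwise descent of $\omega\wedge\Omega=0$) is essentially the classical Singer-type argument and is sound in spirit, but as written it presupposes that $\omega$ and $\alpha$ already have coefficients in $K_i(t)$, which is the hypothesis your Hilbert~90 step was supposed to (and cannot) supply; and note that the present paper does not itself prove Theorem~\ref{algaddendum} but quotes it from \cite{ACPW}, Theorem~4, so the comparison is necessarily with the argument given there.
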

The present paper is concerned with these
\begin{center}
{\bf Fundamental Questions}:
Let a $(n-1)$-form $\Omega$ be given.
\end{center}
\begin{enumerate}
\item If there exists a Liouvillian extension $L\supseteq K$  with  ${\mathcal I}\not=\emptyset$, is ${\mathcal I}\cap K'\not=\emptyset$?
\item If ${\mathcal I}\cap K'=\emptyset$, characterize (minimal) algebraic extensions $\widetilde L$ of $K$ such that $\widetilde L'\cap \mathcal I\not=\emptyset$.
\end{enumerate}
\begin{remark}\label{fieldirrel}{\em 
    For given Liouvillian integrable $\Omega$ the answer to the first  fundamental question does not depend on the choice of the extension $L$. For instance, the answer being positive for some extension $L$ means that there exist $\omega,\,\alpha \in K'$ for which \eqref{probsetup} holds. But then, $\omega,\alpha\in  M'$ for any extension $M$ of $K$, so the answer is positive for $M$, too.}
\end{remark}
Whenever $\mathcal I\cap K'\not=\emptyset$, then we speak of a {\em regular case}; otherwise we speak of an {\em exceptional case}.\\
For $n=3$, i.e., for two-forms over the rational function field in three variables we obtained criteria for regular cases in \cite{ACPW}, but we gave no proof that exceptional cases actually exist. In the present paper we will provide a characterization of exceptional cases, and show that exceptional vector fields exist in dimension three.


\section{Preliminaries}
In the following discussion of the fundamental question we may and will assume that $L$ is a finite Galois extension of $K$, with group $G=Gal(L:K)$. Moreover we let $\omega\in \mathcal I$, such that \eqref{probsetup} holds with some $\alpha\in L'$.

\begin{lemma}\label{intermedfieldlem}
    Let $K\subseteq F\subseteq L$, with $\omega\in {\mathcal I}\cap F' $.
    Then there exists $\widehat\alpha\in F'$ such that 
    \begin{equation*}
        d\omega =\widehat \alpha\wedge \omega,\quad d\widehat \alpha=0.
    \end{equation*}
\end{lemma}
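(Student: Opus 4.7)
The plan is to use Galois averaging. Since $L$ is assumed finite Galois over $K$ and $F$ is an intermediate field, standard Galois theory gives that $L$ is Galois over $F$ as well; let $H=\mathrm{Gal}(L:F)$. The strategy is to average the given $\alpha$ over $H$ to produce a $H$-invariant (hence $F$-valued) replacement that still satisfies the two required identities.

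First, pick any $\alpha\in L'$ for which $d\omega=\alpha\wedge\omega$ and $d\alpha=0$ (guaranteed by $\omega\in\mathcal I$). For each $\sigma\in H$, the Galois action commutes with exterior differentiation and is a derivation on forms, and $\sigma$ fixes $\omega$ because $\omega\in F'$. Applying $\sigma$ to $d\omega=\alpha\wedge\omega$ therefore yields $d\omega=\sigma(\alpha)\wedge\omega$, whence
\begin{equation*}
(\sigma(\alpha)-\alpha)\wedge\omega=0\quad\text{for every }\sigma\in H.
\end{equation*}

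Next, define
\begin{equation*}
\widehat\alpha:=\frac{1}{|H|}\sum_{\sigma\in H}\sigma(\alpha).
\end{equation*}
This is a well-defined one-form (we are in characteristic zero), and by construction it is $H$-invariant, so $\widehat\alpha\in F'$. Using the identity of the previous paragraph,
\begin{equation*}
\widehat\alpha\wedge\omega=\frac{1}{|H|}\sum_{\sigma\in H}\sigma(\alpha)\wedge\omega=\frac{1}{|H|}\sum_{\sigma\in H}\alpha\wedge\omega=\alpha\wedge\omega=d\omega.
\end{equation*}
Finally, since $\sigma$ commutes with $d$ and $d\alpha=0$, we get $d(\sigma(\alpha))=0$ for all $\sigma\in H$, so $d\widehat\alpha=0$. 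This yields the asserted $\widehat\alpha\in F'$.

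There is no real obstacle here beyond checking the well-known facts that the Galois group of the differential extension $L/K$ commutes with $d$ and acts diagonally on wedge products, together with the observation that $L/F$ is again Galois. The proof is a one-paragraph averaging argument; the only point to verify carefully is that the Galois action of $H$ preserves the differential structure, so that $\sigma(d\alpha)=d(\sigma\alpha)$ and $\sigma(\beta\wedge\gamma)=\sigma(\beta)\wedge\sigma(\gamma)$.
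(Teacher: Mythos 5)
Your proof is correct and follows essentially the same route as the paper: apply each $\sigma$ in the Galois group $H$ of $L$ over $F$ to the identity $d\omega=\alpha\wedge\omega$ (using that $\sigma$ fixes $\omega\in F'$) and then average $\alpha$ over $H$. The paper states this in one line; you have simply written out the verification in full (the only cosmetic slip is calling the Galois action ``a derivation on forms'' --- it is a ring automorphism acting diagonally on wedge products, which is what your computation actually uses).
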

\begin{proof}
    The extension $L\supseteq F$ is Galois; call its Galois group $H$. There is $\alpha\in L'$ satisfying condition \eqref{probsetup} in Definition \ref{workdef}, and consequently 
    \begin{equation*}
        d\omega =\sigma(\alpha)\wedge \omega,\quad d\sigma(\alpha)=0
    \end{equation*}
    for every $\sigma\in H$. Averaging over all elements of $H$ shows the assertion.
\end{proof}
In a familiar case fundamental question is known to have a positive answer. 
\begin{lemma}\label{ratfi}
    Let $\omega\in {\mathcal I}$ be exact, thus $\omega=d\ell$ with some $\ell\in L$. Then $\Omega$ admits a rational first integral.
\end{lemma}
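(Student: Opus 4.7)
The plan is to produce a rational first integral by symmetrizing $\ell$ over the Galois group $G = Gal(L:K)$. First I would note that $\ell$ is itself a Liouvillian first integral of $\mathcal{X}$, since $d\ell\wedge\Omega = \omega\wedge\Omega = 0$; moreover $\ell$ is non-constant because $\omega = d\ell\neq 0$.

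Next I would exploit the fact that $\Omega$ has coefficients in $K$ and so is fixed by every $\sigma\in G$. Applying $\sigma$ to the identity $d\ell\wedge\Omega=0$ gives $d(\sigma(\ell))\wedge\Omega=0$, so each Galois conjugate $\sigma(\ell)$ is again a first integral. Consequently the coefficients $a_0,\ldots,a_{r-1}$ of the minimal polynomial
\[
p(T)=\prod_{\sigma\in G}\bigl(T-\sigma(\ell)\bigr)=T^{r}+a_{r-1}T^{r-1}+\cdots+a_0
\]
lie in $K$ by Galois theory, and each is a first integral of $\mathcal{X}$, being (up to sign) an elementary symmetric polynomial in the $\sigma(\ell)$; concretely $da_i\wedge\Omega=0$ follows by expanding $da_i$ in the $d\sigma(\ell)$ and using that each $d\sigma(\ell)\wedge\Omega=0$.

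Thus it suffices to exhibit some $a_i$ that is non-constant. I would rule out the degenerate alternative by contradiction: if every $a_i$ were a scalar in $\mathbb{C}$, then $\ell$ would satisfy a polynomial over $\mathbb{C}$, hence would be algebraic over $\mathbb{C}$ and therefore constant (as $\mathbb{C}$ is algebraically closed), contradicting $d\ell\neq 0$. So at least one $a_i\in K$ is non-constant and serves as the desired rational first integral.

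I do not anticipate a real obstacle in this argument. The only point that deserves care is the justification that each $\sigma\in G$ commutes with exterior differentiation and fixes $\Omega$, but both follow at once from $\sigma(dx_j)=d\sigma(x_j)=dx_j$ together with $\sigma(P_i)=P_i$ for the coefficients $P_i\in K$ of $\Omega$.
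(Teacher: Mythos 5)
Your argument is correct and is essentially the paper's own proof (which is the one-liner ``take any nonconstant coefficient of the minimal polynomial of $\ell$ over $K$''), with the implicit details -- Galois invariance of $\Omega$, $da_i\wedge\Omega=0$ for the symmetric-function coefficients, and nonconstancy of some $a_i$ -- properly filled in. The only quibble is that $\prod_{\sigma\in G}\bigl(T-\sigma(\ell)\bigr)$ is the field (characteristic) polynomial, a power of the minimal polynomial, but its coefficients still lie in $K$ and the argument is unaffected.
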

\begin{proof}
Take any nonconstant coefficient of the minimal polynomial of $\ell$ over $K$.
\end{proof}
{
There is a further class of field extensions that is reasonably well understood with regard to our fundamental question.
\begin{lemma}\label{purex}
Let $K\subseteq F\subseteq L$, and assume that the Galois group of $L$ over $F$ is cyclic. If $\omega\in L'$ satisfies condition \eqref{probsetup} with some $\alpha\in F'$, then ${\mathcal I}\cap F'\not=\emptyset$.\\
In particular when $G$ is cyclic, and condition \eqref{probsetup} holds with some $\alpha\in K'$, then the fundamental question has a positive answer.
\end{lemma}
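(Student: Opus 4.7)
The plan is to use Lagrange resolvents together with Hilbert's Theorem~90 to reduce from a Galois orbit in $L'$ to a single form in $F'$. Let $H:=Gal(L:F)=\langle\sigma\rangle$ with $|H|=k$, and pick a primitive $k$-th root of unity $\zeta\in\mathbb C$, which is available since $L$ and $F$ share the constants.

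The first step is to form, for $j=0,1,\ldots,k-1$, the resolvents
\[
\omega_j \;:=\; \sum_{i=0}^{k-1}\zeta^{-ij}\,\sigma^i(\omega)\;\in\;L'.
\]
Since $\alpha\in F'$ is $H$-fixed, applying $\sigma^i$ to $d\omega=\alpha\wedge\omega$ gives $d(\sigma^i\omega)=\alpha\wedge\sigma^i\omega$; summing with the weights $\zeta^{-ij}$ yields $d\omega_j=\alpha\wedge\omega_j$. Analogously $\omega_j\wedge\Omega=0$, and by construction $\sigma(\omega_j)=\zeta^j\omega_j$. Invertibility of the Vandermonde matrix $(\zeta^{-ij})$ over $\mathbb C$ forces some $\omega_{j_0}\neq 0$; otherwise $\omega$ itself would vanish.

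Now I descend. Since $\zeta^{-j_0}\in F^*$, its norm in the cyclic extension $L/F$ is $N_{L/F}(\zeta^{-j_0})=\zeta^{-j_0 k}=1$, so Hilbert's Theorem~90 supplies $f\in L^{*}$ with $\sigma(f)/f=\zeta^{-j_0}$. Set $\widetilde\omega:=f\,\omega_{j_0}$; then $\sigma(\widetilde\omega)=\zeta^{-j_0}f\cdot\zeta^{j_0}\omega_{j_0}=\widetilde\omega$, hence $\widetilde\omega\in F'$. A direct rescaling computation gives $d\widetilde\omega=(\alpha+df/f)\wedge\widetilde\omega$ with $d(\alpha+df/f)=0$, together with $\widetilde\omega\wedge\Omega=0$; so $\widetilde\omega\in\mathcal I\cap F'$, proving the main claim. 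The ``in particular'' assertion is simply the case $F=K$, so that $H=G$.

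The main obstacle I foresee lies between the first and second stages: the obvious averaged form $\sum_{\tau\in H}\tau\omega$ may well vanish, so one cannot symmetrize naively into $F'$. This is precisely what forces the use of a \emph{nontrivial} Lagrange eigen-component $\omega_{j_0}$, which is no longer $H$-invariant but only a $\zeta^{j_0}$-semi-invariant, and Hilbert~90 is then exactly what is needed to cancel the character $\zeta^{-j_0}$ and return us to $F'$ without leaving $\mathcal I$ (the rescaling by $f$ only shifts $\alpha$ by the exact, hence closed, form $df/f$).
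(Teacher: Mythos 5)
Your proof is correct and is in essence the paper's own argument: your Lagrange resolvents $\omega_j$ are (up to the factor $1/k$) exactly the eigencomponents $t^{j}\eta_j$ that the paper extracts by writing $L=F(t)$ with $t^{m}=g\in F$, and your Hilbert--90 witness $f$ is realized there explicitly as $t^{-j_0}$, whose logarithmic derivative produces the same shift of the closed form, $\alpha\mapsto \alpha-\tfrac{j_0}{m}\tfrac{dg}{g}$. The only cosmetic difference is that you invoke Hilbert's Theorem~90 abstractly where the paper uses the Kummer-theoretic description of cyclic extensions (Lang, VI.6.2) to make the same descent concrete.
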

\begin{proof}
 Denote the degree of $L$ over $F$ by $m$. By Lang \cite{SergeL}, Ch.~VI, Thm.~6.2 there exists $g\in F$ such that the polynomial $T^m-g\in F[T]$ is irreducible, and $L$ is the splitting field of this polynomial.
Thus let $t\in L$ such that $t^m=g$. Then $L=F[t]$ and $1,t,\ldots,t^{m-1}$ form a basis of $L$ over $F$. Moreover with $mt^{m-1}\,dt=dg$ and $t^{m-1}=g/t$ one finds $dt=t\cdot\frac{dg}{m\cdot g}$.\\
Now 
\begin{equation*}
    \omega=\eta_0+t\eta_1+\cdots +t^{m-1}\eta_{m-1}\text{  with all }\eta_i\in F',
\end{equation*}
and $\omega\wedge\Omega=0$ implies that all $\eta_i\wedge\Omega=0$. Likewise, $\alpha\in F'$ implies that $\alpha\wedge\omega=\sum t^i(\alpha\wedge\eta_i)$, and with 
\begin{equation*}
    d(t^i\eta_i)=t^i\left(\frac{i}{m}\frac{dg}{g}\wedge\eta_i+d\eta_i\right)
\end{equation*}
the condition $d\omega=\alpha\wedge\omega$ implies
\begin{equation*}
    d\eta_i=\left(\alpha-\frac{i}{m}\frac{dg}{g}\right)\wedge\eta_i,\quad 0\leq i<m.
\end{equation*}
Since some $\eta_j\not=0$, we are done.
\end{proof}
}

Due to the following observation from \cite{ACPW}, we may modify $\omega$ by a factor in $L^*$.
\begin{lemma}\label{omegamultilem}
    With $0\not=\ell\in L$, $\omega/\ell$ is a first integral of $\Omega$, and the following variant of \eqref{probsetup} holds:
    \begin{equation}\label{Liouv1}
d\left(\frac{\omega}{\ell}\right)=\left(\alpha-\frac{d \ell}{\ell}\right) \wedge \frac{\omega}{\ell},\quad d\left(\alpha-\frac{d \ell}{\ell}\right) =0.
\end{equation}
\end{lemma}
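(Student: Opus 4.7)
The plan is a direct computation: both displayed identities follow from the Leibniz rule for $d$ applied to the product $\ell^{-1}\omega$, together with the hypotheses $\omega \wedge \Omega = 0$, $d\omega = \alpha \wedge \omega$, and $d\alpha = 0$. There is no obstacle here; the lemma is essentially a reformulation of the standard "gauge freedom" for the logarithmic derivative $\alpha$.

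First, I would check that $\omega/\ell \in \mathcal{I}(\Omega)$ in the sense that $(\omega/\ell)\wedge\Omega = 0$. Since $\ell$ is a scalar (element of $L$), this wedge product equals $\ell^{-1}(\omega\wedge\Omega) = 0$ by hypothesis. (This is what is meant by the slightly informal phrase "first integral" in the statement, consistent with Remark \ref{defcondrem}.)

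Next, I would compute $d(\omega/\ell)$ via the Leibniz rule:
\begin{equation*}
d\!\left(\frac{\omega}{\ell}\right) = \frac{d\omega}{\ell} - \frac{d\ell}{\ell^{2}}\wedge\omega = \frac{\alpha\wedge\omega}{\ell} - \frac{d\ell}{\ell}\wedge\frac{\omega}{\ell} = \left(\alpha - \frac{d\ell}{\ell}\right)\wedge \frac{\omega}{\ell},
\end{equation*}
using $d\omega = \alpha\wedge\omega$ in the middle step. This yields the first of the required identities.

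Finally, for the closedness of the new "logarithmic derivative", I would note that $d\alpha = 0$ by assumption, while $d(d\ell/\ell) = d(\ell^{-1})\wedge d\ell + \ell^{-1}\,d(d\ell) = -\ell^{-2}\,d\ell\wedge d\ell = 0$, since $d\ell$ is a $1$-form and $d^{2} = 0$. Subtracting gives $d(\alpha - d\ell/\ell) = 0$, completing the verification of \eqref{Liouv1}.
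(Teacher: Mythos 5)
Your computation is correct and is exactly the standard verification: the paper states this lemma without proof (quoting it as an observation from \cite{ACPW}), and the intended argument is precisely the Leibniz-rule calculation you give, together with $d(d\ell/\ell)=0$. Nothing is missing.
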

Thus, with no loss of generality we may assume
    \begin{equation}\label{blankx1}
          \omega=dx_1+\cdots.
    \end{equation}
    We define
\begin{equation}\label{g0def}
    G_0:=\left\{\sigma\in G;\, \sigma(\omega)\wedge\omega=0\right\}.
\end{equation}
\begin{lemma}\label{G0simplem}
Given $\omega$ as in \eqref{blankx1},
    one has $\sigma(\omega)\wedge\omega=0$ for some $\sigma\in G$ if and only if $\sigma(\omega)=\omega$.
Thus $G_0$ is a subgroup, $\sigma(\omega)=\omega$ if and only if  $\sigma\in G_0$, and the images of $\omega$ under the action of $G$ stand in 1-1 correspondence with the cosets $G/G_0$.
\end{lemma}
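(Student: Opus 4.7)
The plan is to exploit the standard fact that two one-forms with vanishing wedge product must be proportional, and then invoke the normalization \eqref{blankx1} to pin the proportionality constant down to $1$. Once this is in place, the remaining assertions are standard group-theoretic formalities.

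For the main (forward) implication, I would fix $\sigma \in G$ with $\sigma(\omega) \wedge \omega = 0$ and write $\omega = dx_1 + \sum_{i=2}^n a_i\, dx_i$ with $a_i \in L$. Since $\sigma \in G = \mathrm{Gal}(L:K)$ fixes each $x_j \in K$, one has $\sigma(dx_j) = d\sigma(x_j) = dx_j$, so $\sigma(\omega) = dx_1 + \sum_{i=2}^n \sigma(a_i)\, dx_i$. Expanding in the $L$-basis $\{dx_i \wedge dx_j : 1 \le i < j \le n\}$, the coefficient of $dx_i \wedge dx_j$ in $\sigma(\omega) \wedge \omega$ is $\sigma(a_i) a_j - a_i \sigma(a_j)$. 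Setting all such coefficients to zero and specializing $i = 1$ (where $a_1 = \sigma(a_1) = 1$) immediately yields $\sigma(a_j) = a_j$ for every $j$, whence $\sigma(\omega) = \omega$. The reverse implication is trivial, being a consequence of $\omega \wedge \omega = 0$.

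With this equivalence in hand, $G_0$ coincides with the stabilizer of $\omega$ under the natural $G$-action on $L'$ and is therefore automatically a subgroup of $G$. The bijection between the orbit $\{\sigma(\omega) : \sigma \in G\}$ and the coset space $G/G_0$ is then the standard orbit--stabilizer correspondence: $\sigma_1(\omega) = \sigma_2(\omega)$ iff $\sigma_2^{-1}\sigma_1 \in G_0$ iff $\sigma_1 G_0 = \sigma_2 G_0$. The argument presents no real obstacle; the single point worth stressing is the role of the normalization \eqref{blankx1}, which rules out the a priori possibility of a nontrivial scalar factor $c \in L^*$ with $\sigma(\omega) = c\,\omega$ and thereby upgrades ``parallel'' to ``equal.''
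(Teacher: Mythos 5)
Your proof is correct and follows essentially the same route as the paper: write $\omega$ in the normalized form \eqref{blankx1}, use that $\sigma$ fixes the $dx_i$, and read off from the $dx_1\wedge dx_j$ coefficients of $\sigma(\omega)\wedge\omega$ that all coefficients of $\omega$ are fixed by $\sigma$. The concluding orbit--stabilizer remarks, which the paper leaves implicit, are also handled correctly.
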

\begin{proof}
We have, in detail,
\[
   \omega=dx_1+\sum_{i\geq 2} c_i\, dx_i
\]
with $c_i\in L$; thus 
\[
 \sigma(\omega)=dx_1+\sum_{i\geq 2} \sigma(c_i)\,dx_i.
\]
Now
\[
0=\sigma(\omega)\wedge\omega=\sum_{i\geq 2}\left(c_i-\sigma(c_i)\right)\,dx_1\wedge dx_i+\sum_{1<k<i}a_{ki}\,dx_k\wedge dx_i
\]
(with suitable $a_{ik}\in L$) shows that all $\sigma(c_i)=c_i$, whence $\sigma(\omega)=\omega$.
\end{proof}
We take care of a particular case right away. The following was shown by different arguments in \cite{ACPW}, Cor.\ 1, for $n=3$.
\begin{proposition}\label{boreprop}
If $\Omega$ admits only one independent Liouvillian first integral, then $\mathcal I\cap K'\not=\emptyset$.
\end{proposition}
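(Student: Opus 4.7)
The plan is to exploit the Galois action on $\omega$, together with the characterization of the stabilizer $G_0$ provided by Lemma \ref{G0simplem}, and to translate the hypothesis of a unique independent Liouvillian first integral into the vanishing of the wedge products $\sigma(\omega)\wedge\omega$. Concretely, fix a finite Galois extension $L$ of $K$ with group $G$, together with $\omega\in\mathcal I\cap L'$ normalized as in \eqref{blankx1}, and a closed $\alpha\in L'$ satisfying $d\omega=\alpha\wedge\omega$. For every $\sigma\in G$, applying $\sigma$ to \eqref{probsetup} and using $\sigma(\Omega)=\Omega$ shows that $\sigma(\omega)\in\mathcal I$ with closed integrating form $\sigma(\alpha)$. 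The goal is therefore to check that $\sigma(\omega)\wedge\omega=0$ for every $\sigma\in G$; Lemma \ref{G0simplem} will then force $\sigma(\omega)=\omega$ for all $\sigma$, hence $\omega\in K'$, giving $\mathcal I\cap K'\neq\emptyset$.

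To produce the vanishing $\sigma(\omega)\wedge\omega=0$, I associate to each of $\omega$ and $\sigma(\omega)$ an explicit Liouvillian first integral via the familiar construction: integrate the closed one-form $\alpha$ in a suitable Liouvillian extension to obtain $\beta$ with $d\beta=\alpha$, observe that $e^{-\beta}\omega$ is closed and hence exact in a further Liouvillian extension, and set $d\phi=e^{-\beta}\omega$. Applying $\sigma$ and repeating the procedure yields a second Liouvillian first integral $\psi$ with $d\psi=e^{-\sigma(\beta)}\sigma(\omega)$, living in a common Liouvillian extension $M$ of $K$. By the hypothesis that $\Omega$ admits only one independent Liouvillian first integral, $\phi$ and $\psi$ are functionally dependent, i.e. $d\phi\wedge d\psi=0$. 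Since $d\phi$ and $d\psi$ are nonzero $L$-multiples of $\omega$ and $\sigma(\omega)$ respectively, this forces $\omega\wedge\sigma(\omega)=0$, as required.

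The step I expect to be the most delicate is turning ``only one independent first integral'' into the precise relation $d\phi\wedge d\psi=0$. If ``independent'' is read as algebraic independence over $\mathbb C$, a minimal nonzero $F\in\mathbb C[X,Y]$ satisfying $F(\phi,\psi)=0$ must be chosen and separability in characteristic zero used to conclude that $F_Y(\phi,\psi)\neq 0$ (since $\phi$ is nonconstant and therefore transcendental over $\mathbb C$, $F(\phi,Y)\in\mathbb C(\phi)[Y]$ is separable). Differentiating then gives $F_X(\phi,\psi)\,d\phi+F_Y(\phi,\psi)\,d\psi=0$ with a nonzero coefficient on $d\psi$, yielding the desired vanishing. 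Once this wedge vanishing is in hand, Lemma \ref{G0simplem} delivers the conclusion at once.
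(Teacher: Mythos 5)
Your proof is correct and follows the same skeleton as the paper's: normalize $\omega$ as in \eqref{blankx1}, establish $\sigma(\omega)\wedge\omega=0$ for every $\sigma\in G$, and invoke Lemma \ref{G0simplem} to conclude $\sigma(\omega)=\omega$ for all $\sigma$ and hence $\omega\in K'$. The only difference is in the middle step: the paper reads the hypothesis directly as saying the $L$-span of $\mathcal I$ is one-dimensional, making $\sigma(\omega)\wedge\omega=0$ immediate, whereas you derive the wedge-vanishing by building explicit Liouvillian first integrals $\phi,\psi$ from $\omega$ and $\sigma(\omega)$ and exploiting their algebraic dependence over $\mathbb C$ --- a more laborious but equally valid unpacking of the same hypothesis.
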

\begin{proof}
It suffices to consider the algebraic case; with the $L$-vector space spanned by $\mathcal I$ of dimension one. Then we have $\sigma(\omega)\wedge\omega=0$ for all $\sigma\in G$. Moreover we may choose $\omega$ as in Lemma \ref{G0simplem}, hence  $\sigma(\omega)=\omega$ for all $\sigma\in G$, which implies $\omega\in K'$.
\end{proof}

From here on we let \[\omega_1=\omega,\omega_2,\ldots,\omega_N\] be the distinct images under $G$ of $\omega$, with $d\omega_i=\alpha_i\wedge\omega_i$, $1\leq i\leq N$. We set
    \[
    V:=\left< \omega_1,\ldots,\omega_N\right>_L.
    \]
    Moreover we define
    \begin{equation}
        \widetilde V:=\left\{\beta\in L';\, \beta\wedge\Omega=0\right\}
    \end{equation}
      noting that $V\subseteq\widetilde V$ and $\dim_L\,\widetilde V=n-1$. By Proposition \ref{boreprop} we may restrict attention to the case $\dim_L\,V>1$.

\section{The structure of $\mathcal I$}
 We first note an auxiliary lemma:
    \begin{lemma}\label{auxlem} 
        Given $\omega_{i_1},\ldots,\omega_{i_e}$ such that $\omega_{i_1}\wedge\cdots\wedge\omega_{i_e}\not=0$, there exist $\eta_1,\ldots,\eta_{n-1-e}\in K'$ such that all $\eta_j\wedge\Omega=0$ and
        \begin{equation}
            \omega_{i_1}\wedge\cdots\wedge\omega_{i_e}\wedge \eta_1\wedge\cdots\wedge\eta_{n-1-e}=\ell\cdot \Omega
        \end{equation}
        for some nonzero $\ell\in L$.
    \end{lemma}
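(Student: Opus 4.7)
The approach is to work within the $(n-1)$-dimensional $L$-vector space $\widetilde V$ defined in the preceding paragraph, exploiting the fact that it admits a basis consisting of one-forms over $K$. To this end, set $\widetilde V_K := \widetilde V \cap K'$. Since $\Omega$ is nonzero (so that for some $i$, $dx_i \wedge \Omega = \pm P_i\, dx_1\wedge\cdots\wedge dx_n$ is nonzero), the $K$-linear map $\beta \mapsto \beta \wedge \Omega$ from $K'$ to $\Lambda^n K'$ has rank one, whence $\dim_K \widetilde V_K = n-1$. As linear independence survives field extension and $\dim_L \widetilde V = n-1$ as well, any $K$-basis $e_1,\ldots,e_{n-1}$ of $\widetilde V_K$ is automatically an $L$-basis of $\widetilde V$.

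The nonvanishing hypothesis $\omega_{i_1}\wedge\cdots\wedge\omega_{i_e}\neq 0$ means these forms are $L$-linearly independent in $\widetilde V$. Applying the Steinitz exchange lemma to the basis $e_1,\ldots,e_{n-1}$, I can select $\eta_1,\ldots,\eta_{n-1-e}$ from among the $e_j$'s so that $\omega_{i_1},\ldots,\omega_{i_e},\eta_1,\ldots,\eta_{n-1-e}$ form an $L$-basis of $\widetilde V$. By construction each $\eta_j$ lies in $K'$ and satisfies $\eta_j\wedge\Omega=0$, which is the first of the two assertions.

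It remains to compare the wedge $\omega_{i_1}\wedge\cdots\wedge\omega_{i_e}\wedge\eta_1\wedge\cdots\wedge\eta_{n-1-e}$, a nonzero element of the one-dimensional $L$-subspace $\Lambda^{n-1}\widetilde V \subseteq \Lambda^{n-1}L'$, with $\Omega$. The main (and only nontrivial) point is thus to identify $\Omega$ as a $K^\ast$-multiple of $e_1\wedge\cdots\wedge e_{n-1}$; once this is done, a change-of-basis determinant expresses our chosen wedge as an $L^\ast$-multiple of $\Omega$, giving the required $\ell$. For this identification I would complete the $K$-basis $e_1,\ldots,e_{n-1}$ of $\widetilde V_K$ to a basis $e_1,\ldots,e_n$ of $K'$, expand $\Omega$ in the associated basis $\{e_1\wedge\cdots\wedge\widehat{e_i}\wedge\cdots\wedge e_n\}_{i=1}^{n}$ of $\Lambda^{n-1}K'$, and exploit the relations $e_j\wedge\Omega=0$ for $j=1,\ldots,n-1$ to force all coefficients except the one multiplying $e_1\wedge\cdots\wedge e_{n-1}$ to vanish. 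Division then produces the required $\ell\in L^\ast$.
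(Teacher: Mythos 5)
Your proof is correct and follows essentially the same route as the paper, whose entire argument is the one-line instruction to extend $\omega_{i_1},\ldots,\omega_{i_e}$ to a basis of $\widetilde V$ using elements of $K'$. You have simply supplied the details the paper leaves implicit: that $\widetilde V\cap K'$ has $K$-dimension $n-1$ and yields an $L$-basis of $\widetilde V$, the Steinitz exchange, and the identification of any top wedge of a basis of $\widetilde V$ with an $L^*$-multiple of $\Omega$.
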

    \begin{proof}
 Extend $\omega_{i_1},\ldots,\omega_{i_e}$ to a basis of $\widetilde V$, noting that the extension may be carried out with elements of $K'$.
    \end{proof}
    \begin{remark} Consider the special case that $\dim_L\,V=n-1$, with basis $\omega=\omega_1,\ldots,\omega_{n-1}$. With representatives  ${\rm id}=\tau_1,\ldots,\tau_{n-1}$ of $G/G_0$ one then has $\omega_j=\tau_j(\omega_1)$, $1\leq j\leq n-1$, and 
    \[
    d\omega_j=\tau_j(d\omega_1)=\tau_j(\alpha_1)\wedge\omega_j=\alpha_j\wedge\omega_j,\text{  with  } \alpha_j=\tau_j(\alpha_1),\quad d\alpha_j=0
    \]
    for $ 1\leq j\leq n-1$.
    Thus $\omega_1\wedge\ldots\wedge\omega_{n-1}=\ell\cdot\Omega$ for some $\ell\in L$,
    and taking differentials one sees
    \[
    d\Omega=\widehat\beta\wedge\Omega; \quad \widehat\beta=\left(\sum_j\alpha_j-\dfrac{d\ell}{\ell}\right),\quad d\widehat\beta=0.
    \]
    Averaging $\beta=\dfrac{1}{|G|}\sum_{\sigma\in G}\sigma(\widehat\beta)$
        shows that $\Omega$ admits an inverse Jacobi multiplier $\beta\in K'$. This is a special case of a general result obtained by Zhang \cite{zhang2016} by a different approach. See also \cite{ACPW}, subsection 3.1 for three dimensional vector fields.
    \end{remark}
    
\subsection{The setting when $\mathcal I$ contains no closed form}
Throughout this subsection we assume that  $\mathcal I$ contains no closed form. 
We define a relation via
    \begin{equation}
        \omega_i\sim\omega_j:\Longleftrightarrow \alpha_i-\alpha_j=\dfrac{d\ell_{ij}}{\ell_{ij}},\quad\text{some  }\ell_{ij}\in L^*.
    \end{equation}
    This is obviously an equivalence relation, and it is respected by the action of the Galois group; thus
    \[
    \omega_i\sim\omega_j\Longleftrightarrow  \sigma(\omega_i)\sim\sigma(\omega_j),\quad\text{all }\sigma\in G.
    \]
Moreover we set
    \begin{equation}
        {\mathcal B}_1:=\left\{\omega_j;\, \omega_j\sim\omega_1\right\},
    \end{equation}
    then (possibly by renumbering) we may assume that
    \begin{equation}
         {\mathcal B}_1=\left\{\omega_1, \ldots \omega_q\right\}
    \end{equation}
    and that $\omega_1,\ldots,\omega_s$ form a maximal subset linearly independent over $L$.\\
    We further define {\em  blocks} $\mathcal B_k$, $1\leq k\leq r$ as the images of $\mathcal B_1$ under the action of $G$, and set
    \[
    V_k:=\left<\mathcal B_k\right>_L.
    \]
    Note that $\dim_L \,V_k=s$ for all $k$.
    For later use we relabel
    \begin{equation}\label{blockrefine}
        \mathcal B_k=\left\{\omega_{k,1},\ldots,\omega_{k,s}\right\}\cup \left\{ \omega_{k,s+1},\ldots,\omega_{k,q}\right\},
    \end{equation}
    with $\omega_{k,1},\ldots,\omega_{k,s}$ forming a maximal $L$-linearly independent set\footnote{In the case $q=s$, the second set is empty.}. With relabeling we also write
    \[
    d\omega_{k,i}=\alpha_{k.i}\wedge\omega_{k,i},\quad d\alpha_{k,i}=0.
    \]
We obtain a decomposition of $V$:
    \begin{lemma}\label{dirsumlem}
        $V$ is the direct sum of $V_1,\ldots,V_r$, and $rs\leq n-1$.
    \end{lemma}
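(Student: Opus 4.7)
The plan is to argue directness of the sum by contradiction, and then obtain $rs\leq n-1$ from $V\subseteq\widetilde V$. Assume toward a contradiction that there is a nontrivial $L$-linear relation $\sum_{k=1}^m v_k=0$ with $v_k\in V_k\setminus\{0\}$ and $m\geq 2$, chosen so that the total number of nonzero coefficients in the expansions below is minimal. Within each block $\mathcal B_k$, by Lemma~\ref{omegamultilem} applied to the equivalence $\sim$, I replace each generator by a suitable scalar multiple $\tilde\omega_{k,i}=\omega_{k,i}/\ell_{k,i}$ ($\ell_{k,i}\in L^*$) so that $d\tilde\omega_{k,i}=\alpha^{(k)}\wedge\tilde\omega_{k,i}$ for a common closed $\alpha^{(k)}$ per block; by the definition of blocks, $\beta^{(k)}:=\alpha^{(k)}-\alpha^{(1)}$ is not a logarithmic derivative in $L$ for $k\geq 2$. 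Write $v_k=\sum_{i=1}^s c_{k,i}\tilde\omega_{k,i}$ with $c_{k,i}\in L$, and, after scaling the relation, assume $c_{1,1}=1$.

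Differentiating $\sum_k v_k=0$ and subtracting $\alpha^{(1)}\wedge\bigl(\sum_k v_k\bigr)=0$ yields
\[
\sum_{k,i}\bigl(dc_{k,i}+c_{k,i}\beta^{(k)}\bigr)\wedge\tilde\omega_{k,i}=0.
\]
Since $\tilde\omega_{k,i}\in\widetilde V$ is equivalent to $\tilde\omega_{k,i}(\mathcal X)=0$, contracting with $\mathcal X$ produces the $1$-form relation
\[
\sum_{k,i}\bigl[\mathcal X(c_{k,i})+c_{k,i}\beta^{(k)}(\mathcal X)\bigr]\,\tilde\omega_{k,i}=0,
\]
whose coefficient at $(1,1)$ vanishes (as $c_{1,1}=1$ and $\beta^{(1)}=0$). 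This is a strictly shorter relation; by minimality, together with the $L$-linear independence of $\tilde\omega_{k,1},\dots,\tilde\omega_{k,s}$ within each block, it must be trivial, yielding
\[
\mathcal X(c_{k,i})+c_{k,i}\beta^{(k)}(\mathcal X)=0\quad\text{for all }(k,i).
\]

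To complete the contradiction I plan to upgrade this scalar identity along $\mathcal X$ to the full $1$-form identity $d\log c_{k,i}=-\beta^{(k)}$, which would make $\beta^{(k)}$ a logarithmic derivative in $L$ — contradicting the inequivalence of blocks. Concretely, pass to a Picard--Vessiot extension $M\supseteq L$ containing hyperexponentials $u_k$ with $du_k/u_k=\alpha^{(k)}$. In $M$ each $\tilde\omega_{k,i}/u_k$ is closed, so after adjoining primitives $\psi_{k,i}$ one may write $v_k=u_k\phi_k$ with $\phi_k=\sum_i c_{k,i}\,d\psi_{k,i}$, and the relation becomes $\sum_k u_k\phi_k=0$. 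The classical Kolchin--Ostrowski/Rosenlicht linear independence of hyperexponentials with inequivalent logarithmic derivatives gives $L$-linear independence of the $u_k$; propagating through the integral extension forces each $\phi_k=0$, hence $v_k=0$. Finally, $V=\bigoplus_k V_k\subseteq\widetilde V$ with $\dim_L\widetilde V=n-1$ gives $rs\leq n-1$. The main obstacle is the last step — bridging the $\mathcal X$-component identity to the full $1$-form identity — which requires careful control of logarithmic-derivative independence across the Liouvillian tower.
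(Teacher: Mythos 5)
Your setup (minimal dependence relation, per-block normalization to a common closed $\alpha^{(k)}$, differentiation, and extraction of the coefficient forms $\gamma_{k,i}=dc_{k,i}+c_{k,i}\beta^{(k)}$) is sound and runs parallel to the paper's proof; your contraction with $\mathcal X$ is a legitimate substitute for the paper's device of wedging with the complementary $\omega$'s and invoking Lemma~\ref{auxlem}, since $\gamma_{k,i}(\mathcal X)=0$ is exactly $\gamma_{k,i}\wedge\Omega=0$. The genuine gap is your final step, and it stems from a misdiagnosis: you believe you must ``upgrade'' the identity $\gamma_{k,i}(\mathcal X)=0$ to the full $1$-form identity $\gamma_{k,i}=0$ by some independent analytic means (hyperexponentials, primitives, Kolchin--Ostrowski), and you leave that bridge unbuilt. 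But no such machinery is needed, because you have not used the standing hypothesis of this subsection, namely that $\mathcal I$ contains no closed form. The form $\widehat\gamma_{k,i}:=\gamma_{k,i}/c_{k,i}=dc_{k,i}/c_{k,i}+\beta^{(k)}$ is closed (both summands are) and lies in $L'$, and what you derived says precisely $\widehat\gamma_{k,i}\wedge\Omega=0$; if it were nonzero it would be a closed element of $\mathcal I$, contradicting the hypothesis. Hence $\widehat\gamma_{k,i}=0$, i.e.\ $\beta^{(k)}=-dc_{k,i}/c_{k,i}$ is a logarithmic derivative in $L$, contradicting the inequivalence of blocks. This one line is exactly how the paper closes the argument.

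The omission is not merely cosmetic: an argument that never invokes the no-closed-form hypothesis cannot succeed, because the block inequivalence only forbids $\beta^{(k)}$ from being a logarithmic derivative, not from being a logarithmic derivative plus a nonzero closed form annihilating $\Omega$ --- which is all that $\gamma_{k,i}(\mathcal X)=0$ gives you without that hypothesis. Your proposed Picard--Vessiot detour also has its own problems as sketched: from $\sum_k u_k\phi_k=0$ you would need linear independence of the $u_k$ over the field generated by $L$ together with all the adjoined primitives and the $c_{k,i}$, and you would need to rule out $\alpha^{(k)}-\alpha^{(1)}$ becoming a logarithmic derivative in that larger field; neither is supplied by the bare statement of Kolchin--Ostrowski, and you acknowledge this as ``the main obstacle.'' Replace that entire paragraph with the observation above and your proof is complete; the concluding deduction $rs\leq n-1$ from $V=\bigoplus_k V_k\subseteq\widetilde V$ and $\dim_L\widetilde V=n-1$ is correct and is the same as the paper's.
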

    \begin{proof} The second assertion is immediate from the first. The first assertion amounts to showing that the set $\{\omega_{k,1},\ldots,\omega_{k,s},\,1\leq k\leq r\}$ is linearly independent.  Assume linear dependence, and choose a minimal linearly dependent subset, with elements relabeled as $\omega_1^*,\ldots,\omega_m^*$ (with $d\omega_k^*=\alpha_k^*\wedge\omega_k^*$). Thus there exist $a_k\in L$, all nonzero, so that 
        \[
        \sum_{k=1}^m a_{k}\,\omega_{k}^*=0.
        \]
        We may choose $a_1=-1$, hence
        \[
        \omega_1^*=\sum_{k=2}^m a_{k}\,\omega_{k}^*
        \]
        with linearly independent $\omega_2^*,\ldots,\omega_m^*$.
        Differentiation yields
        \[
        \begin{array}{rcl}
           \alpha_1^*\wedge\omega_1^*=  d\omega_1^*& =& \sum_{k}da_{k}\wedge\omega_{k}^*+\sum_{k}a_{k}\,\alpha_{k}^*\wedge\omega_{k}^* \\   &=&\sum_{k=1}^m\left(da_{k}+a_{k}\alpha_{k}^*\right)\wedge\omega_{k}^*.\\
        \end{array}
        \]
        Combining this with the expression for $\omega_1^*$, we find that
        \begin{equation}\label{trick}
        \sum_{k=2}^m\left(da_k+a_k\alpha_k^*-a_k\alpha_1^*\right)\wedge\omega_k^*=0.
        \end{equation}
        Now assume that some $\beta_j:=da_j+a_j\alpha_j^*-a_j\alpha_1^*\not=0$. Forming the wedge product with all the $\omega_k^*$, $k\not=j,\,1$ one has
        \[
        \beta_j\wedge\omega_2^*\wedge\cdots\wedge\omega_m^*=0,
        \]
        while Lemma \ref{auxlem} shows that there exist $\eta_1,\ldots,\eta_{n-m+1}\in K'$ such that 
        \[
\omega_2^*\wedge\cdots\wedge\omega_m^*\wedge\eta_1\wedge\cdots\wedge\eta_{n-m+1}=\ell\Omega
        \]
        with some $\ell\not=0$. 
        With $\widehat\beta_j:=\frac{1}{a_j}\beta_j$ one finds
        $\widehat\beta_j\wedge\Omega=0$, $d\widehat\beta_j=0$, which contradicts our assumption that $\mathcal I$ contains no closed form. Therefore all $\beta_k=0$, and 
        \[
        \alpha_k^*-\alpha_1^*= -\dfrac{da_k}{a_k}; \quad\text{hence}\quad \alpha_k^*\sim\alpha_1^*.
        \]
        By the definition of blocks, all $\omega_k^*$ lie in the same block, say $\mathcal B_i$. But this contradicts the linear independence of $\omega_{i,1},\ldots,\omega_{i,s}$.
        \end{proof}
We now can dispose of the case $s=1$, $r>1$ (for the case $s=r=1$ recall Proposition \ref{boreprop}): 
\begin{proposition}\label{permgroup} 
Assume that $\dim_L\,V_k=1$ for all $k$. Then $\mathcal{B}_k=\{\omega_{k,1}\}$, thus $\left|\mathcal{B}_k\right|=1$ for all $k$, and $\omega_{1,1},\ldots,\omega_{r,1}$ form a basis of $V$, on which $G$ acts as a permutation group, with  $[L:K]=\left|Gal(L:K)\right|=r$.
\end{proposition}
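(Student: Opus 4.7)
The plan is to establish the three structural claims in sequence and then handle the degree formula.

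First, I would show $|\mathcal{B}_k|=1$ for every block. Since $\dim_L V_1=1$, every $\omega_j\in\mathcal{B}_1$ is an $L$-multiple of $\omega_1$, and so $\omega_j\wedge\omega_1=0$. Because $\omega_j$ is one of the distinct Galois images of $\omega_1$, say $\omega_j=\sigma(\omega_1)$, Lemma \ref{G0simplem} then forces $\omega_j=\omega_1$; since the elements of $\mathcal{B}_1$ were taken to be distinct, $\mathcal{B}_1=\{\omega_1\}$. The Galois group permutes the blocks by bijections of constant cardinality, so $|\mathcal{B}_k|=1$ for all $k$.

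Second, the basis claim is immediate from Lemma \ref{dirsumlem}: one has $V=V_1\oplus\cdots\oplus V_r$ with each $V_k$ one-dimensional and spanned by its unique block element $\omega_{k,1}$, so $\omega_{1,1},\ldots,\omega_{r,1}$ form an $L$-basis of $V$. Their collection is precisely the full set of distinct Galois images of $\omega=\omega_{1,1}$, which therefore has cardinality $r$, and $G$ acts on it transitively by permutations.

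Third, for $[L:K]=r$ I would argue via orbit-stabilizer: $|G|=r\cdot|G_0|$, where $G_0$ is the stabilizer of $\omega_{1,1}$. To force $|G_0|=1$, I would replace $L$ by the subfield $F:=K(\omega_{1,1},\ldots,\omega_{r,1})$; this is $G$-stable and hence Galois over $K$, and Lemma \ref{intermedfieldlem} ensures that the conditions \eqref{probsetup} can be realized over $F'$. Under this reduction the new Galois group $\mathrm{Gal}(F/K)$ acts faithfully on the basis, and when the action is regular one obtains $[F:K]=r$. The main obstacle is precisely this regularity step: it requires $G_0$ to be normal in $G$, a condition that is automatic in the three-dimensional case (where $r\leq 2$ forces a cyclic Galois group) but that in higher dimensions must be read as an implicit minimality convention on the chosen $L$.
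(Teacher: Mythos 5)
Your treatment of the first two claims is exactly the paper's route, only written out in full: the paper's entire proof is the observation that $q=s=1$ follows from Lemma \ref{G0simplem} (one-dimensionality of $V_1$ forces $\sigma(\omega_1)\wedge\omega_1=0$ for every member of $\mathcal B_1$, hence $\sigma(\omega_1)=\omega_1$ by that lemma, using the normalization $\omega_1=dx_1+\cdots$), after which the basis statement is read off from Lemma \ref{dirsumlem}. That is precisely your first two paragraphs, and they are correct.

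Where you diverge is the degree formula, and your caveat is warranted rather than a defect of your write-up: the paper gives no argument there (``the remaining assertions are then obvious''), whereas orbit--stabilizer together with Lemma \ref{G0simplem} only yields $[G:G_0]=r$, i.e.\ $[L^{G_0}:K]=r$; the asserted equality $|G|=r$ needs $G_0$ trivial, which does not follow from the stated hypotheses for an arbitrary Galois $L$ with $\mathcal I\cap L'\neq\emptyset$. Your repair --- pass to the fixed field $F$ of the kernel of the permutation action, which is normal, so that $F$ is Galois, contains all $\omega_{k,1}$, and carries a suitable $\alpha$ by Lemma \ref{intermedfieldlem} --- is sound but, as you note, only produces a faithful \emph{transitive} action of $Gal(F:K)$ on $r$ points, so $r\le[F:K]\le r!$, with equality $[F:K]=r$ exactly when $G_0$ is normal (the action is regular). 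In the only case the paper subsequently uses, $n=3$, one has $rs\le 2$, hence $r=2$; then $G_0$ has index two, is automatically normal and equal to the kernel, and the fixed field is a quadratic Galois intermediate field --- which is exactly how Proposition \ref{summary3prop}(E1) is phrased (an intermediate $F$ with $[F:K]=2$, not $[L:K]=2$). One small correction: $r=2$ forces $G/G_0\cong\mathbb Z_2$, not that $G$ itself is cyclic. In short, your proof matches the paper where the paper actually argues, and the obstruction you isolate in the last step is real and is not addressed by the paper's one-line proof either.
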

\begin{proof}
We have  $q=s=1$ from Lemma \ref{G0simplem}. The remaining assertions are then obvious.
\end{proof}
         From here on we will assume that $s>1$. We introduce a further stabilizer subgroup.

     \begin{definition}
         Given that $s>1$, let
         \[
         G_1:=\left\{\sigma\in G;\,\sigma(\mathcal B_k)=\mathcal B_k,\quad 1\leq k\leq r\right\},
         \]
and denote by $M$ the fixed field of $G_1$.
\end{definition}
         Thus $G_1$ stabilizes all subspaces $V_k$, and obviously $G_1$ is normal in $G$. \\
We next construct a different basis for $V_1$: Possibly renumbering variables, with Gauss-Jordan one obtains
         \[
V_1=\left<dx_1+\sum_{j=1}^{n-s} k_{1,j}\,dx_{j+s},\ldots,dx_s+\sum_{j=1}^{n-s} k_{s,j}\,dx_{j+s}\right>
         \]
         over $L$. Averaging over $G_1$ yields
         \begin{equation}
             \mu_i:=dx_i+\dfrac{1}{|G_1|}\sum_{\sigma\in G_1}\sum_{j=1}^{n-s}\sigma(k_{i,j})dx_{j+s}\in V_1,\quad 1\leq i \leq s,
         \end{equation}
         and linear independence of the $\mu_i$ shows
         \[
         V_1=\left<\mu_1,\ldots,\mu_s\right>
         \]
         over $L$, with $\tau(\mu_i)=\mu_i$ for all $\tau\in G_1$, $1\leq i\leq s$.
         Setting
         \[
         \widetilde\Omega:=\mu_1\wedge\ldots\wedge\mu_s
         \] 
         we see from above that $\widetilde\Omega=k\cdot\omega_{1,1}\wedge\cdots\wedge\omega_{1,s} $ with some $k\in L^*$. 
  \begin{lemma}\label{lem15one}
           There exists $\ell\in L$ such that the form   $\widetilde\omega_{1,1}:=\widetilde \ell^{1/s}\,\omega_{1,1}$ satisfies
           \begin{equation}
               d\widetilde\omega_{1,1}=\widetilde\alpha_{1}\wedge\widetilde\omega_{1,1}; \quad\widetilde\alpha_1:=\alpha_{1,1}+\dfrac{1}{s}\dfrac{d\widetilde \ell}{\widetilde\ell}\in M'.
           \end{equation}
         \end{lemma}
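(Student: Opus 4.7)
The plan is to exploit the $G_1$-invariance of $\widetilde\Omega$, combined with the fact that $\mathcal I$ contains no closed form, which is the standing assumption of the subsection.

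First I would set $\Omega_1 := \omega_{1,1} \wedge \cdots \wedge \omega_{1,s}$ and use the construction of the $\mu_i$ to write $\widetilde\Omega = k\,\Omega_1$ for some $k \in L^*$. A straightforward computation using $d\omega_{1,i} = \alpha_{1,i}\wedge\omega_{1,i}$, and moving the 1-form $\alpha_{1,i}$ back past the preceding $\omega_{1,j}$ (the $(-1)^{i-1}$ signs cancel), gives
\begin{equation*}
d\Omega_1 = A\wedge \Omega_1, \qquad A := \sum_{i=1}^s \alpha_{1,i}.
\end{equation*}
Since $\omega_{1,1},\ldots,\omega_{1,s}$ all lie in $\mathcal B_1$, they are pairwise $\sim$-equivalent to $\omega_{1,1}$, so for each $i$ there exists $\ell_i \in L^*$ with $\alpha_{1,i} = \alpha_{1,1} + d\ell_i/\ell_i$. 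Hence $A = s\alpha_{1,1} + d\log\prod_i \ell_i$, and setting $\widetilde\ell := k\cdot\prod_{i=1}^s \ell_i$ one obtains
\begin{equation*}
d\widetilde\Omega = \Bigl(\tfrac{dk}{k} + A\Bigr)\wedge \widetilde\Omega = \Bigl(s\alpha_{1,1} + \tfrac{d\widetilde\ell}{\widetilde\ell}\Bigr)\wedge \widetilde\Omega = s\,\widetilde\alpha_1 \wedge \widetilde\Omega,
\end{equation*}
where $\widetilde\alpha_1 := \alpha_{1,1} + \frac{1}{s}\frac{d\widetilde\ell}{\widetilde\ell}$ is manifestly closed, being a sum of two closed forms.

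Next I would show $\widetilde\alpha_1 \in M'$. Since the $\mu_i$ are $G_1$-invariant, so is $\widetilde\Omega$. Applying any $\tau \in G_1$ to $d\widetilde\Omega = s\widetilde\alpha_1\wedge \widetilde\Omega$ yields $(\tau(\widetilde\alpha_1) - \widetilde\alpha_1)\wedge \widetilde\Omega = 0$. Because $\widetilde\Omega$ and $\Omega_1$ differ by an $L$-scalar, this kernel condition is the same as $(\tau(\widetilde\alpha_1) - \widetilde\alpha_1)\wedge \Omega_1 = 0$, forcing $\tau(\widetilde\alpha_1) - \widetilde\alpha_1 \in V_1$. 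But $\tau(\widetilde\alpha_1) - \widetilde\alpha_1$ is closed, while $V_1 \subseteq \widetilde V$ consists of 1-forms whose wedge with $\Omega$ vanishes. Any nonzero closed element of $V_1$ would therefore lie in $\mathcal I$ (take $\alpha = 0$ in Definition \ref{workdef}), contradicting the subsection's standing assumption. Thus $\tau(\widetilde\alpha_1) = \widetilde\alpha_1$ for every $\tau \in G_1$, i.e., $\widetilde\alpha_1 \in M'$.

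Finally, a direct computation
\begin{equation*}
d\bigl(\widetilde\ell^{1/s}\,\omega_{1,1}\bigr) = \widetilde\ell^{1/s}\Bigl(\tfrac{1}{s}\tfrac{d\widetilde\ell}{\widetilde\ell} + \alpha_{1,1}\Bigr)\wedge \omega_{1,1} = \widetilde\alpha_1 \wedge \widetilde\omega_{1,1}
\end{equation*}
completes the proof. The main technical obstacle is step two, ensuring that the 1-form obtained after applying $\tau$ and subtracting really does annihilate $\Omega_1$ (and not merely $\widetilde\Omega$); once one observes that the kernels of $\cdot\wedge\widetilde\Omega$ and $\cdot\wedge\Omega_1$ coincide in $L'$, the closed-form contradiction with $\mathcal I$ concludes the argument cleanly.
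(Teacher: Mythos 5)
Your proposal is correct and follows essentially the same route as the paper: compute $d\widetilde\Omega=\rho\wedge\widetilde\Omega$ with $\rho=\frac{dk}{k}+\sum_i\alpha_{1,i}=s\widetilde\alpha_1$, use the $G_1$-invariance of $\widetilde\Omega$ to get that $\tau(\rho)-\rho$ is a closed form annihilating $\Omega$, and invoke the standing assumption that $\mathcal I$ contains no closed form to conclude $\rho\in M'$. The only cosmetic difference is that the paper extends $\omega_{1,1},\ldots,\omega_{1,s}$ to a basis of $\widetilde V$ via Lemma \ref{auxlem} to pass from $(\tau(\rho)-\rho)\wedge\widetilde\Omega=0$ to $(\tau(\rho)-\rho)\wedge\Omega=0$, whereas you observe directly that the difference lies in $V_1\subseteq\widetilde V$; both are valid.
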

\begin{proof} A straightforward computation shows 
         \[
         d\widetilde\Omega=\rho\wedge\widetilde\Omega; \quad \rho:=\dfrac{dk}{k}+\alpha_{1,1}+\cdots+\alpha_{1,s}.
         \]
         By construction, 
         $\sigma(\widetilde\Omega)=\widetilde\Omega$ for all $\sigma\in G_1$, so $d\widetilde\Omega=\sigma(\rho)\wedge\widetilde\Omega$ and
         \[
\left(\sigma(\rho)-\rho\right)\wedge\widetilde\Omega=0,\quad\text{all}\quad \sigma\in G_1.
         \] 
                 Now use Lemma {\ref{auxlem}} to extend $\omega_{1,1},\ldots,\omega_{1,s}$ by $\eta_{s+1},\ldots,\eta_{n-1}\in K'$ to a basis of $\widetilde V$, and to obtain
         \[
         \left(\sigma(\rho)-\rho\right)\wedge\Omega=0,\quad d\left(\sigma(\rho)-\rho\right)=0
         \]
         for all $\sigma\in G_1$. Since $\mathcal I$ contains no closed form, we see that $\rho$ is invariant by $G_1$. By equivalence we have $\alpha_{1,j}=\alpha_{1,1}+ \dfrac{d\widetilde\ell_{1,j}}{\widetilde\ell_{1,j}}$ with some $\widetilde\ell_{1,j}\in L$, so 
         \[
        \rho=s\alpha_{1,1}+\dfrac{d\widetilde \ell}{\widetilde \ell},\quad\text{some}\quad\widetilde\ell\in L.
         \]
\end{proof}
         Now let $\widetilde L$ be the smallest Galois extension of $K$ that contains both $L$ and $\lambda:=\widetilde\ell^{1/s}$. Every $\tau\in Gal(L:K)$ extends to some $\widetilde\tau\in \widetilde G:=Gal(\widetilde L:K)$, with
        \[
\widetilde\tau(\lambda)^s=\widetilde\tau(\lambda^s)=\widetilde \tau(\widetilde\ell)=\tau(\widetilde\ell),
        \]
        so $\widetilde\tau(\lambda)$ is an $s^{\rm th}$ root of $\widetilde\ell$. Therefore $\widetilde\tau(\widetilde\omega_{1,1})\in \widetilde L^*\,\omega_{i,1}$ whenever $\tau(\omega_{1,1})=\omega_{i,1}$.\footnote{Upon passing from $G$ to $\widetilde G$ we can no longer argue that $\widetilde\sigma(\widetilde\omega)=\widetilde\omega$ when these forms are linearly dependent over $\widetilde L$.} \\
        From $\widetilde G$ one obtains extended blocks
        \[
        \widetilde{\mathcal B}_k=\left\{ \widetilde\omega_{k,1},\ldots,\widetilde\omega_{k,q}\right\}\supseteq \mathcal B_k,\quad 1\leq k\leq r,
        \]
        possibly with different $q$. As before, the $\widetilde{\mathcal B}_k$ are the images of $\widetilde{\mathcal B}_1$ under the action of $\widetilde G$. We let
        \[
        \widetilde G_1=\left\{ \widetilde\sigma;\,\widetilde\sigma(\widetilde{\mathcal B}_k)=\widetilde{\mathcal B}_k;\quad\text{all}\quad k\right\},
        \]
        which is a normal subgroup of $\widetilde G$. We denote by $\widetilde M$ the fixed field of $\widetilde G_1$, noting $M\subseteq\widetilde M$.
        \begin{lemma}\label{lem15two}
            For all $k$, $1\leq k\leq r$, there exist $\widetilde\alpha_k\in \widetilde M'$ such that, for all $\widetilde\omega_{k,j}\in \widetilde{\mathcal B}_k$,
            \begin{equation}
                d\widetilde \omega_{k,j}=\widetilde\alpha_k\wedge\widetilde\omega_{k,j}.
            \end{equation}
        \end{lemma}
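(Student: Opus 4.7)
The plan is to transport $\widetilde\alpha_1$ through the Galois action of $\widetilde G$. For each $k\in\{1,\ldots,r\}$ I would fix a representative $\widetilde\tau_k\in\widetilde G$ with $\widetilde\tau_k(\widetilde{\mathcal B}_1)=\widetilde{\mathcal B}_k$ (taking $\widetilde\tau_1=\mathrm{id}$), and set $\widetilde\alpha_k:=\widetilde\tau_k(\widetilde\alpha_1)$. Two things then need checking: (a) that $\widetilde\alpha_k\in\widetilde M'$, and (b) that $d\widetilde\omega_{k,j}=\widetilde\alpha_k\wedge\widetilde\omega_{k,j}$ for every $\widetilde\omega_{k,j}\in\widetilde{\mathcal B}_k$.

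Step (a) should follow from normality. Given $\widetilde\rho\in\widetilde G_1$, the element $\widetilde\tau_k^{-1}\widetilde\rho\widetilde\tau_k$ still lies in $\widetilde G_1\trianglelefteq\widetilde G$; since the text notes $M\subseteq\widetilde M$, the subgroup $\widetilde G_1$ fixes $M$ pointwise and therefore fixes $\widetilde\alpha_1\in M'$. Consequently $\widetilde\rho(\widetilde\alpha_k)=\widetilde\tau_k\bigl((\widetilde\tau_k^{-1}\widetilde\rho\widetilde\tau_k)(\widetilde\alpha_1)\bigr)=\widetilde\alpha_k$, so $\widetilde\alpha_k\in\widetilde M'$.

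For (b), every $\widetilde\omega_{k,j}\in\widetilde{\mathcal B}_k$ equals $\widetilde\sigma(\widetilde\omega_{1,1})$ for some $\widetilde\sigma\in\widetilde G$ with $\widetilde\sigma(\widetilde{\mathcal B}_1)=\widetilde{\mathcal B}_k$; applying $\widetilde\sigma$ to the relation $d\widetilde\omega_{1,1}=\widetilde\alpha_1\wedge\widetilde\omega_{1,1}$ of Lemma~\ref{lem15one} yields $d\widetilde\omega_{k,j}=\widetilde\sigma(\widetilde\alpha_1)\wedge\widetilde\omega_{k,j}$. Thus the whole problem -- including the well-definedness of $\widetilde\alpha_k$ with respect to the choice of $\widetilde\tau_k$ -- reduces to the single claim that the setwise stabilizer of $\widetilde{\mathcal B}_1$ in $\widetilde G$ fixes $\widetilde\alpha_1$.

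I expect that last claim to be the main obstacle, since the setwise stabilizer is in general strictly larger than $\widetilde G_1$. The plan is: for $\widetilde\rho$ stabilizing $\widetilde{\mathcal B}_1$ as a set, $\widetilde\rho(\widetilde\omega_{1,1})$ lies in $\widetilde{\mathcal B}_1$, hence is $\sim$-equivalent to $\widetilde\omega_{1,1}$, giving $\widetilde\rho(\widetilde\alpha_1)-\widetilde\alpha_1=d\ell/\ell$ for some $\ell\in\widetilde L^*$, a closed 1-form. Mimicking the trick from the proof of Lemma~\ref{dirsumlem}, I would wedge with the $\widetilde L$-linearly independent elements of $\widetilde{\mathcal B}_1$, extend to a basis of $\widetilde V$ by forms from $K'$ via Lemma~\ref{auxlem}, and conclude that $(d\ell/\ell)\wedge\Omega=0$. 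This produces a closed form in $\mathcal I$, and the standing hypothesis that $\mathcal I$ contains no closed form then forces $d\ell/\ell=0$, i.e.\ $\widetilde\rho(\widetilde\alpha_1)=\widetilde\alpha_1$, completing the proof.
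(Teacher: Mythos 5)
You have correctly isolated the point that the paper's two\nobreakdash-line proof leaves implicit: the candidate $\widetilde\alpha_k:=\widetilde\tau_k(\widetilde\alpha_1)$ must not depend on the choice of $\widetilde\tau_k$, and a single $\widetilde\alpha_1$ must serve every element of $\widetilde{\mathcal B}_1$; both reduce to showing that the setwise stabilizer $H$ of $\widetilde{\mathcal B}_1$ fixes $\widetilde\alpha_1$. Your step (a) (membership in $\widetilde M'$ via normality of $\widetilde G_1$) is fine. The gap is in the final step. From $\widetilde\rho(\widetilde\alpha_1)-\widetilde\alpha_1=d\ell/\ell$ you propose to get $(d\ell/\ell)\wedge\Omega=0$ by wedging with the elements of $\widetilde{\mathcal B}_1$ and invoking Lemma \ref{auxlem}. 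But the only relations available are $d\widetilde\omega_{1,1}=\widetilde\alpha_1\wedge\widetilde\omega_{1,1}$ and $d\bigl(\widetilde\rho(\widetilde\omega_{1,1})\bigr)=\widetilde\rho(\widetilde\alpha_1)\wedge\widetilde\rho(\widetilde\omega_{1,1})$, and since $\widetilde\rho(\widetilde\omega_{1,1})$ is in general a different form from $\widetilde\omega_{1,1}$, you cannot subtract these to isolate a term of the shape $(d\ell/\ell)\wedge(\cdots)$; the subtraction would require already knowing $d\widetilde\omega_{1,j}=\widetilde\alpha_1\wedge\widetilde\omega_{1,j}$, which is the conclusion. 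This is genuinely different from the situation in Lemma \ref{dirsumlem}, where one starts from a single linear dependence relation among the $\omega_k^*$.

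The claim you need is nevertheless true, and the working argument is the computation in the proof of Lemma \ref{lem15one} run with $H$ in place of $G_1$. Any $\widetilde\rho\in H$ stabilizes $V_1=\langle\widetilde{\mathcal B}_1\rangle$, and each $\mu_i=dx_i+\sum_{j}(\cdots)\,dx_{j+s}$ is the \emph{unique} element of $V_1$ of that reduced\nobreakdash-echelon shape (a difference of two such would be a nonzero element of $V_1$ with no $dx_1,\ldots,dx_s$ component). Hence $\widetilde\rho(\mu_i)=\mu_i$ for all $i$, so $\widetilde\rho$ fixes $\widetilde\Omega=\mu_1\wedge\cdots\wedge\mu_s$ exactly. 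Applying $\widetilde\rho$ to $d\widetilde\Omega=\rho\wedge\widetilde\Omega$, where $\rho=s\,\widetilde\alpha_1$, gives $(\widetilde\rho(\rho)-\rho)\wedge\widetilde\Omega=0$; Lemma \ref{auxlem} upgrades this to $(\widetilde\rho(\rho)-\rho)\wedge\Omega=0$, and since this form is closed and $\mathcal I$ contains no closed form, $\widetilde\rho(\rho)=\rho$, i.e.\ $\widetilde\rho(\widetilde\alpha_1)=\widetilde\alpha_1$. With that substitution the rest of your plan goes through. For comparison, the paper's own proof (``the remaining assertions follow by the action of $\widetilde G$'') silently skips exactly the well\nobreakdash-definedness point you flagged, so your proposal is more honest about where the work lies, even though its key sub-step needs the repair above.
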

        \begin{proof}For $\widetilde\omega_{1,1}$ this was shown in the previous lemma. The remaining assertions follow by the action of $\widetilde G$.
        \end{proof}
\begin{remark}{\em
Mutatis mutandis, the statement of Lemma \ref{lem15two} also holds, obviously, when $\mathcal I$ contains a closed form $\omega\in L'$.  In this case we have one block $\mathcal B_1$ consisting of all images of $\omega$ under the action of the Galois group, and $\widetilde\alpha_1=0$. (For the case $s=1$ see, again, Proposition \ref{boreprop}.)
}
\end{remark}
\subsection{The general setting}
We continue to assume that $s>1$, thus each $V_k$ has dimension $>1$ over $L$.
As a matter of notation we drop all tildes from here on (in other words, we assume that we started with $\widetilde L$ right away).\\
 In case $q=s$ every matrix representing $\sigma\in G$ is a permutation matrix. Now assume $q>s$.
        Using the notation of \eqref{blockrefine}, for fixed $k$ we have that 
        \[
        \omega_{k,s+j}=\sum_{i=1}^s a_{ij}\omega_{k,i},\quad 1\leq j\leq q-s,
        \]
         with $a_{ij}\in L$. But furthermore we see
         \[
         \sum_ia_{ij}\,\alpha_k\wedge\omega_{k,i} =\alpha_k\wedge\omega_{k,s+j}=d\omega_{k,s+j}=\sum_i\left(da_{ij}+a_{ij}\alpha_k\right)\wedge\omega_{k,i},
         \]
         hence 
         \[
         \sum_{i=1}^s da_{ij}\wedge\omega_{k,i}=0.
         \]
         Now fix $i^*$, take the wedge product with all $\omega_{k,p}$, $p\not= i^*$ and use Lemma \ref{auxlem} to see that 
         \[
        d a_{i^*, j}\wedge\Omega=0.
         \]
         So, by Lemma \ref{ratfi} we obtain the following result:
         \begin{lemma}\label{allinClem}
             If $\Omega$ admits no rational first integral, then all $a_{ij}\in\mathbb C$.
         \end{lemma}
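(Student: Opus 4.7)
The plan is essentially outlined in the paragraph immediately preceding the statement, and boils down to an application of Lemma \ref{ratfi} once the identity $da_{i^*,j}\wedge\Omega=0$ has been established for every admissible $i^*$ and $j$.

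I would first fix $i^*\in\{1,\ldots,s\}$ and wedge the relation $\sum_{i=1}^{s} da_{ij}\wedge\omega_{k,i}=0$ (derived just above the statement) with the product $\bigwedge_{p\in\{1,\ldots,s\}\setminus\{i^*\}}\omega_{k,p}$. All summands with $i\neq i^*$ vanish because they contain a repeated factor $\omega_{k,i}$, and what remains, up to a sign, is
\[
da_{i^*,j}\wedge\omega_{k,1}\wedge\cdots\wedge\omega_{k,s}=0.
\]
Next I invoke Lemma \ref{auxlem} with $e=s$ applied to the linearly independent family $\omega_{k,1},\ldots,\omega_{k,s}$, obtaining one-forms $\eta_1,\ldots,\eta_{n-1-s}\in K'$, each satisfying $\eta_p\wedge\Omega=0$, such that $\omega_{k,1}\wedge\cdots\wedge\omega_{k,s}\wedge\eta_1\wedge\cdots\wedge\eta_{n-1-s}=\ell\cdot\Omega$ for some nonzero $\ell\in L$. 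Wedging the displayed identity with $\eta_1\wedge\cdots\wedge\eta_{n-1-s}$ then yields $\ell\cdot da_{i^*,j}\wedge\Omega=0$, and since $\ell\neq 0$ one concludes $da_{i^*,j}\wedge\Omega=0$.

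For the final step I distinguish two cases on the exact form $da_{i^*,j}\in L'$. If $da_{i^*,j}\neq 0$, then it lies in $\mathcal{I}$: the wedge condition with $\Omega$ has just been verified, and the remaining conditions in \eqref{probsetup} hold trivially with $\alpha=0$ since $d(da_{i^*,j})=0$. Lemma \ref{ratfi} then produces a rational first integral of $\Omega$, contradicting the hypothesis. Hence $da_{i^*,j}=0$, and because $K$ and $L$ share the field of constants $\mathbb{C}$, we conclude $a_{i^*,j}\in\mathbb{C}$.

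No serious obstacle is present here: the wedge-product trick used to isolate each $da_{i^*,j}$ is the same maneuver employed in the proof of Lemma \ref{dirsumlem}, and Lemma \ref{auxlem} delivers exactly the completion by $K$-forms needed to convert the $s$-fold wedge into a multiple of $\Omega$. The only small care-point is to verify that the exact one-form $da_{i^*,j}$ qualifies for Lemma \ref{ratfi}, which reduces to the remark that closedness makes $\alpha=0$ admissible in the definition of $\mathcal{I}$.
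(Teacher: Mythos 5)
Your proof is correct and follows essentially the same route as the paper: the identity $\sum_i da_{ij}\wedge\omega_{k,i}=0$, the wedge with the remaining $\omega_{k,p}$ to isolate $da_{i^*,j}$, the completion via Lemma \ref{auxlem} to get $da_{i^*,j}\wedge\Omega=0$, and the appeal to Lemma \ref{ratfi}. Your added care-point --- checking that the exact form $da_{i^*,j}$, if nonzero, genuinely lies in $\mathcal I$ with $\alpha=0$ --- is exactly the implicit step in the paper's terser presentation.
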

         We summarize:
         \begin{proposition}\label{summaryprop}
             Assume that $s>1$ and that $\Omega$ admits no rational first integral (and $L=\widetilde L$ w.l.o.g.). Then for every $\sigma\in G$ the matrix $A(\sigma)\in GL(rs,L)$ that represents its action on the basis $\omega_{1,1},\ldots,\omega_{1,s},\ldots,\omega_{r,1},\ldots,\omega_{r,s}$ actually has all entries in $\mathbb C$.
         \end{proposition}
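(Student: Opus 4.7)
The plan is to assemble the preceding analysis: the block decomposition $V=V_1\oplus\cdots\oplus V_r$, the $G$-invariance of this decomposition, and Lemma \ref{allinClem} together force the constancy of all entries of $A(\sigma)$.

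First I would observe that, because the equivalence relation $\sim$ on the set of Galois conjugates of $\omega$ is $G$-stable, the group $G$ permutes the blocks $\mathcal B_1,\ldots,\mathcal B_r$: for every $\sigma\in G$ there is a permutation $k\mapsto\sigma(k)$ of $\{1,\ldots,r\}$ with $\sigma(\mathcal B_k)=\mathcal B_{\sigma(k)}$, and hence $\sigma(V_k)=V_{\sigma(k)}$. Viewing $A(\sigma)$ as an $r\times r$ array of $s\times s$ sub-blocks relative to the decomposition $V=V_1\oplus\cdots\oplus V_r$, the only possibly non-zero sub-blocks are those in row $\sigma(k)$ and column $k$, $k=1,\ldots,r$; it therefore suffices to show that each such sub-block has entries in $\mathbb C$.

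Next, fix $\sigma$ and a basis element $\omega_{k,i}$ with $1\le i\le s$. Since $\sigma$ permutes the elements of $\mathcal B_k$ onto those of $\mathcal B_{\sigma(k)}$, there exists $j\in\{1,\ldots,q\}$ with $\sigma(\omega_{k,i})=\omega_{\sigma(k),j}$. If $j\le s$, the right-hand side is already a basis vector and the column of $A(\sigma)$ indexed by $(k,i)$ is a standard basis vector of $\mathbb C^{rs}$. If $j>s$, I would expand $\omega_{\sigma(k),j}$ in the basis $\omega_{\sigma(k),1},\ldots,\omega_{\sigma(k),s}$ as in \eqref{blockrefine}; the coefficients lie \emph{a priori} in $L$, but Lemma \ref{allinClem}---applicable because $\Omega$ admits no rational first integral---forces them into $\mathbb C$. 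In either case the column has entries in $\mathbb C$, and hence so does the entire matrix.

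The delicate point I anticipate is justifying the identification $\sigma(\omega_{k,i})=\omega_{\sigma(k),j}$ without a spurious scalar factor from $L^*$. In the original setting this is immediate, because the $\omega_j$ are literal $G$-conjugates of $\omega_{1,1}$ on which $G$ acts by permutation. After the tilde modification of Lemma \ref{lem15one} this needs a brief verification: the enlarged blocks $\widetilde{\mathcal B}_k$ are by construction the full $\widetilde G$-orbits of $\widetilde{\mathcal B}_1$, so once the tildes are dropped $G$ still permutes the elements of each $\mathcal B_k$ directly, and no extra scalars arise. With this in hand, the cases $j\le s$ and $j>s$ exhaust all possibilities and the proposition follows.
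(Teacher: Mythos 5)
Your proposal is correct and follows essentially the same route as the paper: both arguments rest on the fact that $G$ permutes the blocks (hence each $\sigma(\omega_{k,i})$ is literally some $\omega_{\sigma(k),j}$), combined with Lemma \ref{allinClem} to force the expansion coefficients of the $\omega_{\sigma(k),j}$ with $j>s$ into $\mathbb C$. The paper merely organizes the case split as $\sigma\in G_1$ versus $\tau\in G\setminus G_1$ rather than treating all $\sigma$ uniformly through the block permutation, and your remark about the absence of spurious $L^*$-scalars after the tilde modification is a valid (and correctly resolved) point of care.
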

         \begin{proof}
             \begin{enumerate}[(i)]
             \item For $\sigma\in G_1$ and fixed $k$, one sees with Lemma \ref{allinClem} that $\sigma$ sends every $\omega_{k,j}$ to a $\mathbb C$-linear combination of the $\omega_{k,i}$.
             \item Now let $\tau\in G\setminus G_1$, and $k$ fixed. Then we have 
             \[
        \tau(\omega_{k,i})=\omega_{\ell,j}, \quad \text{ for some } \ell\leq r,\ \text{and  some } j\leq q,
             \]
             and by the first part, $\omega_{\ell,j}$ is a $\mathbb C$-linear combination of $\omega_{\ell, 1},\ldots,\omega_{\ell, s}$.
             \end{enumerate}
         \end{proof}
          Thus we have a linear representation of the Galois group $G$ of $L$ in some $GL(rs,\mathbb C)$, with $rs\leq n-1$.  Note the block form of the matrices, corresponding to the blocks $\mathcal B_k$.\\

\begin{lemma}\label{domega0lemplus}
    Given the setting of Proposition \ref{summaryprop}, there is an intermediate field $F$ of the extension $L\supseteq K$ such that $F$ is Galois over $K$, $\omega\in F'$ and the representation of $Gal\,(F:K)$ in $GL(rs,\,\mathbb C)$ is faithful. Thus we may assume that the Galois group is isomorphic to a finite subgroup of $GL(rs,\,\mathbb C)$.
\end{lemma}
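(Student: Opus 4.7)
The plan is to take the kernel of the linear representation supplied by Proposition \ref{summaryprop} and descend to the corresponding fixed field. Let $\rho\colon G\to GL(rs,\mathbb C)$ denote that representation, given by $\sigma\mapsto A(\sigma)$ with respect to the basis $\omega_{1,1},\ldots,\omega_{r,s}$ of $V$, and set $N:=\ker\rho$. Since $N$ is a normal subgroup of $G$, the Galois correspondence gives an intermediate field $F:=L^{N}$ that is Galois over $K$ with $Gal(F:K)\cong G/N$. By the first isomorphism theorem $\rho$ factors through an injective representation $\bar\rho\colon G/N\hookrightarrow GL(rs,\mathbb C)$, which is exactly the faithful representation asserted by the lemma.

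The remaining point is to check $\omega\in F'$. Here I would use that every $\sigma\in N$ acts trivially on each basis element of $V$ by the very definition of $N$; in particular $\sigma(\omega_{1,1})=\omega_{1,1}$. With the normalization $\omega=\omega_{1,1}=dx_1+\sum_{i\geq 2}c_i\,dx_i$ from \eqref{blankx1}, the coordinate-by-coordinate comparison used in the proof of Lemma \ref{G0simplem} shows that each $c_i$ is fixed by all of $N$, hence $c_i\in L^{N}=F$ and $\omega\in F'$. The final sentence of the lemma (``we may assume $G$ is isomorphic to a finite subgroup of $GL(rs,\mathbb C)$'') is then a matter of renaming: replace $L$ by $F$ and $G$ by $G/N$ throughout the preceding setup, which is legitimate because $F$ contains $\omega$ and all basis elements $\omega_{k,j}$ — the latter inherited from $L$ are fixed by $N$ as well, again by the kernel condition.

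I do not anticipate a substantive obstacle; the argument is the standard passage through the kernel of a linear representation combined with the Galois correspondence. The only point that needs a line of justification (rather than a mere appeal to generalities) is the implication ``$\sigma$ fixes the basis $\Rightarrow\sigma$ fixes the coefficients of $\omega$,'' which is disposed of by the computation already carried out in Lemma \ref{G0simplem}.
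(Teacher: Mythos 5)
Your proposal is correct and coincides with the paper's own argument: both pass to the kernel $N$ of $\sigma\mapsto A(\sigma)$, take $F$ to be its fixed field, and observe that $N$ fixes every element of $V$ so that $V\subseteq F'$ while $G/N\cong\widehat G$ gives the faithful representation. (Your appeal to Lemma \ref{G0simplem} is harmless but not needed, since $\sigma(\omega)=\omega$ already means $\sigma$ fixes each coefficient $c_i$ by the definition of the action on one-forms.)
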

\begin{proof}
     Let $\widehat G$ be the image of $\tau\mapsto A(\tau)$, and $N\subseteq G$ its kernel; moreover let $F$ the fixed field of $N$. Since $N$ is normal, $F$ is Galois over $K$ with Galois group isomorphic to $G/N$. By definition, the action of $N$ fixes every element of $V$; this shows $V\subseteq F'$. Moreover $G/N$ acts in a well-defined manner on forms in $V$ via $\tau N(\omega):=\tau(\omega)$, and this yields a canonical isomorphism from $G/N$ to $\widehat G$.
\end{proof}
         So in the case $s>1$ we arrive at finite groups of complex $rs\times rs$ matrices. By the solution of the inverse problem of differential Galois theory, every such group is the Galois group of some Picard-Vessiot extension of $K$. We will utilize this fact for three dimensional vector fields later on.
   
\subsection{Specialization to dimension three}
In this subsection we will consider three-dimensional rational vector fields
\begin{equation}\label{vfield3d}
\mathcal{X}=P\frac{\partial }{\partial x}+Q\frac{\partial }{\partial y}+R\frac{\partial }{\partial z};
\end{equation}
or the corresponding 2-forms
\begin{equation}\label{2form3d}
\Omega=P\,dy \wedge dz+Q\,dz \wedge dx+R\,dx \wedge dy
\end{equation}
defined over $K:=\mathbb C(x,y,z)$.

According to Lemma \ref{dirsumlem} and Proposition \ref{boreprop}, in dimension $n=3$, the exceptional cases satisfy $rs=2$.
\begin{proposition}\label{summary3prop}
    Let $(\Omega, L)$ represent an exceptional case in dimension three.
\begin{itemize}
\item[(E1)] In case $s=1$ and $r=2$ there exists a degree two intermediate field $F$ with $[F:K]=2$, and $\omega\in F'$, $\alpha\in F'\setminus K'$, satisfying condition  \eqref{probsetup}.
\item[(E2)] In case $s=2$ and $r=1$  there exists a cyclic extension $\widetilde L$ of $L$ which is Galois over $K$, such that \eqref{probsetup} holds for some $\widetilde\omega\in \widetilde L'$ and $\widetilde\alpha \in K'$, and the action of the Galois group $\widetilde G=Gal\,(\widetilde L:K)$ on $\widetilde \omega$ induces a faithful representation of $\widetilde G$ in $GL(2,\mathbb C)$.
\end{itemize}
\end{proposition}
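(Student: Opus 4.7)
The proof splits according to the two possibilities for $(s,r)$. By Lemma \ref{dirsumlem} one has $rs\leq n-1=2$, while Proposition \ref{boreprop} rules out $s=r=1$ in the exceptional case, so either $(s,r)=(1,2)$ or $(s,r)=(2,1)$.

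For case (E1), I first observe that if $\mathcal I$ contained a closed form we would have a single block (remark after Lemma \ref{lem15two}), so $r=1$; hence $r=2$ places us automatically in the hypothesis of Proposition \ref{permgroup}. That proposition yields $[L:K]=r=2$, so take $F:=L$, which has $[F:K]=2$ and satisfies $\omega\in F'$. Lemma \ref{intermedfieldlem} now supplies a closed $\alpha\in F'$ with $d\omega=\alpha\wedge\omega$. It remains to argue $\alpha\notin K'$: if $\alpha\in K'$, then since $L\supseteq K$ is cyclic quadratic, Lemma \ref{purex} would force $\mathcal I\cap K'\neq\emptyset$, contradicting exceptionality.

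For case (E2), exceptionality rules out a rational first integral (else $d\phi\in\mathcal I\cap K'$ for $\phi\in K$), so Proposition \ref{summaryprop} is at my disposal. Since $r=1$ there is a single block $\mathcal B_1$, hence $G_1=G$ and the fixed field $M$ of $G_1$ is just $K$. Lemma \ref{lem15one} (or, if $\mathcal I$ contains a closed form, the remark following Lemma \ref{lem15two} with $\widetilde\alpha_1=0$) then produces $\widetilde\omega\in\widetilde L'$ together with $\widetilde\alpha\in M'=K'$ obeying \eqref{probsetup}; here $\widetilde L$ arises from adjoining an $s$-th root with $s=2$, so $[\widetilde L:L]\in\{1,2\}$ is cyclic and $\widetilde L$ is Galois over $K$ by construction. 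Proposition \ref{summaryprop} now tells us that the matrices of $\widetilde G=\mathrm{Gal}(\widetilde L:K)$ acting on $\widetilde\omega_{1,1},\widetilde\omega_{1,2}$ have entries in $\mathbb C$, and a final invocation of Lemma \ref{domega0lemplus}, passing to the fixed field of the kernel of this representation, secures faithfulness while preserving $\widetilde\omega$ and the closed $\widetilde\alpha\in K'$.

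The main obstacle I anticipate is the bookkeeping in case (E2): after the quadratic root adjunction of Lemma \ref{lem15one} and the subsequent passage to a faithful quotient via Lemma \ref{domega0lemplus}, one must verify that the resulting field has all the properties claimed, notably the cyclic-over-$L$ structure. The delicate point is that Lemma \ref{domega0lemplus} only guarantees an intermediate field between $K$ and $\widetilde L$, which a priori need not sit above the original $L$; the cleanest remedy is to relabel the output of the two constructions so that the final base field (re-called $L$) is exactly the field produced by Lemma \ref{domega0lemplus}, with the $\widetilde L$ of the proposition being its at-most-quadratic extension coming from the root adjunction. Verifying that this relabelled $\widetilde L$ remains Galois over $K$ with $\widetilde L/L$ cyclic is the crux of the argument.
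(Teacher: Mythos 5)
Your argument is correct and follows essentially the same route as the paper, whose proof is simply a citation of Proposition \ref{permgroup} and Proposition \ref{summaryprop} together with the observation that the subcase $s=1$, $\alpha\in K'$ is regular by Lemma \ref{purex} — exactly the three ingredients you invoke. The extra bookkeeping you supply (ruling out $s=r=1$ via Proposition \ref{boreprop}, noting $G_1=G$ hence $M=K$ when $r=1$, and tracking the root adjunction from Lemma \ref{lem15one}) is a faithful unpacking of what the paper leaves implicit rather than a different argument.
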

\begin{proof} See Proposition \ref{permgroup} and Proposition \ref{summaryprop}. The case $s=1$, $\alpha\in K'$ is regular by Lemma \ref{purex}.
\end{proof}
Thus, in dimension three we have two possible exceptional types.

\section{Exceptional cases in dimension three}
The description of case (E1) is quite straightforward: We have a quadratic extension $L$ of $K$, and forms $\omega=\omega_1\in L'\setminus \{0\}$, $\alpha=\alpha_1\in L'\setminus K'$ such that \eqref{probsetup} is satisfied. The nontrivial element of the Galois group sends $\omega_1$ to $\omega_2$ and $\alpha_1$ to $\alpha_2$. We turn to (E2).

\subsection{Type (E2): Finite subgroups of $GL(2,\mathbb C)$}

The Galois group in case (E2) admits a faithful representation in $GL(2,\mathbb C)$. The finite subgroups of $GL(2,\mathbb C)$ were classified, up to conjugation, in Nguyen et al.\ \cite{NPT}, and the classification of finite subgroups of $SL(2,\mathbb C)$ is known (see e.g.\cite{NPT}, Thm.\ 3, or van der Put and Springer \cite{vdPS}, Thm.\ 4.29). These classifications are based on the well-known classification of the finite subgroups of 
$PGL(2,\mathbb C)$, viz.
\begin{itemize}
    \item the cyclic groups;
    \item the dihedral groups $D_n$;
    \item the alternating group $A_4$, the symmetric group $S_4$, and the alternating group $A_5$ (the symmetry groups of the Platonic solids).
\end{itemize}
Thus every finite subgroup of $GL(2,\mathbb C)$ has one of the above as image under the canonical projection to the projective linear group. 

As a preliminary step we show that it suffices to consider a representation of the Galois group in $SL(2,\mathbb C)$; possibly at the expense of a further degree two field extension.

\begin{lemma}\label{sl2lem}
  Given case (E2) and the setting of Proposition \ref{summary3prop}, the following hold.
    \begin{enumerate}[(a)]
        \item There is an extension $\widehat L=L(k)$ of $L$, of degree $\leq 2$, which is Galois over $K$, with $k^2\in L$.
        \item For every $\widehat\sigma\in \widehat G:= Gal(\widehat L:K)$ that extends $\sigma\in G=Gal(L:K)$, there exists $\ell_{\widehat\sigma}\in \mathbb C$ such that 
        \[
        \widehat\sigma(k)=\ell_{\widehat\sigma}\cdot k;\quad \ell_{\widehat\sigma}^2=\det A(\sigma).
        \]
        \item With $\widehat\omega_i:=\frac{1}{k}\ \omega_i$, $1\leq i\leq r$ one has
        \[
        d\widehat\omega_i=\widehat\alpha\wedge\widehat\omega_i,\quad \widehat\alpha\in K',\, d\widehat\alpha=0.
        \]
        \item Moreover 
        \[
        \widehat\sigma (\begin{pmatrix}\widehat\omega_1\\ \widehat\omega_2\end{pmatrix}) =B(\widehat\sigma)\, \begin{pmatrix}\widehat\omega_1\\ \widehat\omega_2\end{pmatrix}
        \]
        with
        \[
        B(\widehat\sigma)=\frac{1}{\ell_{\widehat\sigma}}\, A(\sigma)\in SL(2,\mathbb C).
        \]
        In particular, the projections of $B(\widehat\sigma)$ and $A(\sigma)$ to $PGL(2,\mathbb C)$ are equal.
    \end{enumerate}
\end{lemma}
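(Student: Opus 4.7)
The plan is to produce the extension $\widehat L$ explicitly from the determinant of the action on $\omega_1\wedge\omega_2$, and then verify the four claims by direct computation.

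First I would introduce $\rho:=\omega_1\wedge\omega_2$. Since $\omega_1,\omega_2$ span the two--dimensional space $\widetilde V=\{\beta\in L':\beta\wedge\Omega=0\}$, we have $\rho=h\cdot\Omega$ for a unique $h\in L^*$. The Galois action is the main input here: for any $\sigma\in G$ with matrix $A(\sigma)\in GL(2,\mathbb C)$ acting on $(\omega_1,\omega_2)^\top$, a direct computation gives $\sigma(\rho)=\det A(\sigma)\cdot\rho$, while on the other hand $\sigma(\rho)=\sigma(h)\cdot\Omega$ (since $\Omega\in K'$). Comparing, we obtain the cocycle identity
\[
\sigma(h)=\det A(\sigma)\cdot h,\qquad \text{for all }\sigma\in G.
\]

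Next I would define $\widehat L:=L(k)$ where $k$ is a square root of $h$. Either $h$ is already a square in $L$ (and $\widehat L=L$) or $[\widehat L:L]=2$, giving (a) once we see that $\widehat L$ is Galois over $K$. For (b), pick any extension $\widehat\sigma$ of $\sigma\in G$ to $\widehat L$; since $\widehat\sigma(k)^2=\sigma(k^2)=\sigma(h)=\det A(\sigma)\cdot k^2$ and $\det A(\sigma)\in\mathbb C$, the element $\ell_{\widehat\sigma}:=\widehat\sigma(k)/k$ lies in $\mathbb C$ and satisfies $\ell_{\widehat\sigma}^2=\det A(\sigma)$. This also shows that $\widehat L$ contains all Galois conjugates of $k$, confirming that $\widehat L$ is Galois over $K$.

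For (c) I would compute $d\widehat\omega_i=d(\omega_i/k)=(\alpha-dk/k)\wedge\widehat\omega_i$ using $d\omega_i=\alpha\wedge\omega_i$ with $\alpha\in K'$ (from case (E2) in Proposition \ref{summary3prop}). Differentiating $k^2=h$ yields $dk/k=dh/(2h)\in L'$, so $\widehat\alpha:=\alpha-dh/(2h)$ actually lives in $L'$, and it is closed since $\alpha$ is closed and $dh/h$ is closed. To get $\widehat\alpha\in K'$, I would invoke Galois invariance: for every $\sigma\in G$, $\sigma(\det A(\sigma)\cdot h)/\sigma(h)=1$ gives $\sigma(dh/h)=dh/h$, hence $\sigma(\widehat\alpha)=\widehat\alpha$, and Galois theory of the extension $L/K$ forces $\widehat\alpha\in K'$. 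Finally (d) follows from a one--line computation: $\widehat\sigma(\widehat\omega_i)=\sigma(\omega_i)/(\ell_{\widehat\sigma}k)$, so $B(\widehat\sigma)=\ell_{\widehat\sigma}^{-1}A(\sigma)$, with $\det B(\widehat\sigma)=\det A(\sigma)/\ell_{\widehat\sigma}^2=1$; the statement on $PGL(2,\mathbb C)$ projections is then immediate because $B$ and $A$ differ by the scalar $\ell_{\widehat\sigma}^{-1}$.

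The main conceptual step — and the only one that is not merely bookkeeping — is the identification $\sigma(h)=\det A(\sigma)\cdot h$, which both provides the right candidate field $\widehat L$ and ensures that the scalars $\ell_{\widehat\sigma}$ needed to normalise into $SL(2,\mathbb C)$ are genuine constants. The rest (Galois invariance of $\widehat\alpha$, the $SL(2)$--property of $B$, equality of the $PGL(2)$--images) reduces to routine verifications once this cocycle identity and the substitution $dk/k=dh/(2h)$ are in hand.
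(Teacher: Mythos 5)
Your argument is correct and essentially the paper's: you adjoin a square root of an element on which $G$ acts through the character $\sigma\mapsto\det A(\sigma)$, and all the downstream verifications (constancy of $\ell_{\widehat\sigma}$, closedness and rationality of $\widehat\alpha$, the $SL(2,\mathbb C)$ normalisation and the $PGL(2,\mathbb C)$ statement) coincide with those in the text. The only cosmetic difference is the choice of that element: the paper uses $\Delta=\det C$ for the change-of-basis matrix $C$ from a rational basis $(\eta,\theta)$ of $\widetilde V$ to $(\omega_1,\omega_2)$, while you use $h$ defined by $\omega_1\wedge\omega_2=h\,\Omega$; since $\eta\wedge\theta$ is a $K^*$-multiple of $\Omega$, these differ only by a factor in $K^*$ and satisfy the same cocycle identity, so the two choices are interchangeable.
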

\begin{proof}
    For parts (a) and (b), let $\eta,\theta\in K'$ be linearly independent such that $\eta\wedge\Omega=\theta\wedge\Omega=0$. Then there exists $C\in GL(2,L)$ such that 
    \[
    \begin{pmatrix}
        \omega_1\\ \omega_2
    \end{pmatrix}=C\cdot \begin{pmatrix}
        \eta\\ \theta
    \end{pmatrix}.
    \]
    Apply $\sigma\in G$ to see
    \[
    \sigma(C)\cdot\begin{pmatrix}
        \eta\\ \theta
    \end{pmatrix}=\sigma (\begin{pmatrix}
        \omega_1\\ \omega_2
    \end{pmatrix})=A(\sigma)\cdot\begin{pmatrix}
        \omega_1\\ \omega_2
    \end{pmatrix}=A(\sigma)\cdot C\cdot \begin{pmatrix}
        \eta\\ \theta
    \end{pmatrix},
    \]
    hence $\sigma(C)=A(\sigma)\cdot C$, and $\Delta:=\det C\in L$ satisfies $\sigma(\Delta)=\det A(\sigma)\cdot \Delta$.\\
    Now let $k$ such that $k^2=\Delta$, and  $M=L(k)$ (so possibly $M=L$ when $k\in L$). When $M\not=L$, then for given $\sigma\in G$ define $\widehat \sigma\in Gal(M:K)$, by choosing $\ell_{\widehat\sigma}\in\mathbb C$ with $\ell_{\widehat\sigma}^2=\det A(\sigma)$, and $\widehat\sigma|_{L}=\sigma$, $\widehat\sigma(k)=\ell_{\widehat\sigma}\cdot k$. (There are two such extensions.) This is consistent with
    \[
    \widehat\sigma(k)^2=\widehat\sigma(k^2)=\sigma(k^2)=\det A(\sigma)\cdot k^2,
    \]
and one verifies that $M$ is Galois over $K$.
\\
    As to part (c), let $\widehat\sigma\in\widehat G  $, thus $\widehat\sigma(k)=\ell_{\widehat\sigma}\cdot k$. Then $\widehat\sigma(dk)=\ell_{\widehat\sigma}\cdot dk$, and 
    \[
    \widehat\sigma\left(\frac{dk}{k}\right)=\frac{dk}{k},\quad
    \text{hence  } \frac{dk}{k}\in K'.
    \]
    Now 
    \[
    d(\widehat \omega_i)= d\left(\frac{1}{k}\,\omega_i\right)=\left(-\frac{dk}{k^2}+\frac{1}{k}\alpha\right)\wedge\omega_i =\left(-\frac{dk}{k}+\alpha\right)\wedge\widehat\omega_i.
    \]
    As to part (d), we have
    \[
    \widehat\sigma\left(\frac{1}{k}\,\omega_i\right)=\frac{1}{\ell_{\widehat\sigma}}\,\frac{1}{k}\,\sigma(\omega_i),
    \]
    which shows the first assertion about $B(\widehat\sigma)$, and moreover
    \[
    \det B(\widehat\sigma)=\ell_{\widehat\sigma}^2\,\det A(\sigma)=1.
    \]
    Since $B(\widehat\sigma)$ is a constant multiple of $A(\sigma)$, it projects to the same element of $PGL(2,\mathbb C)$.
\end{proof}
From here on we consider an exceptional case (E2) with the Galois group $G=Gal(L:K)$ admitting a faithful representation in $SL(2,\mathbb C)$. 

According to \cite{vdPS}, Theorem 4.29, the finite subgroups of $SL(2,\mathbb C)$ may be grouped in three classes, up to conjugacy:
\begin{enumerate}
    \item Finite subgroups of the Borel subgroup 
    \[
    \left\{\begin{pmatrix}
        a& b\\ 0&a^{-1}
    \end{pmatrix};\quad a\in\mathbb C^*,\,b\in\mathbb C\right\}.
    \]
    Since every such matrix with $b\not=0$ has infinite order, a finite subgroup is diagonal, hence cyclic, generated by a diagonal matrix with a root of unity $a$.
    \item Finite subgroups of the infinite dihedral group 
    \[
    D_\infty:=\left\{\begin{pmatrix}
        a&0\\0&a^{-1}
    \end{pmatrix}, a\in\mathbb C^*\right\}\bigcup \left\{\begin{pmatrix}
        0&b\\-b^{-1}&0
    \end{pmatrix}, b\in\mathbb C^*\right\}.
    \]
    Given a finite $H\subseteq D_\infty$, the subgroup of diagonal matrices is cyclic, and $H$ is generated by this subgroup and any non-diagonal element, which we take as $\begin{pmatrix}
        0&1\\ -1&0
    \end{pmatrix}$.
    \item One of the groups $A_4^{SL(2)}$, $S_4^{SL(2)}$ or $A_5^{SL(2)}$, the respective inverse images of the symmetry groups of the Platonic solids. 
\end{enumerate}
\begin{remark}\label{nocevec}   {\em  We note for later use: If any of these finite groups admits a common eigenvector then it is cyclic.}
\end{remark}
We proceed case-by-case. Case 1 is taken care of by Lemma \ref{purex}, as follows:
\begin{proposition}
  Given exceptional case (E2), the Galois group $G$ cannot be cyclic.
\end{proposition}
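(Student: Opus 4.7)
\emph{Plan.} My approach is to obtain a direct contradiction with the exceptional assumption by invoking Lemma \ref{purex}.

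First, I would unpack what the standing hypotheses provide. Proposition \ref{summary3prop}(E2), together with the reduction in Lemma \ref{sl2lem}(c) (after the renaming $\widehat L \mapsto L$, $\widehat G \mapsto G$, $\widehat\alpha \mapsto \alpha$), gives a Galois extension $L \supseteq K$ whose group $G$ embeds faithfully in $SL(2,\mathbb C)$, a form $\omega = \omega_1 \in L'$ (with $G$-conjugate $\omega_2$), and a one-form $\alpha \in K'$ such that $\omega \wedge \Omega = 0$, $d\omega = \alpha \wedge \omega$, and $d\alpha = 0$; in other words $\omega$ satisfies \eqref{probsetup} with $\alpha$ already defined over $K$.

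Then I would argue by contradiction. Suppose $G$ is cyclic; then $L/K$ is a cyclic extension. Lemma \ref{purex} applies with the choice $F := K$, since $Gal(L:K) = G$ is cyclic and $\alpha \in K' = F'$. Its conclusion is $\mathcal I \cap K' \neq \emptyset$, i.e.\ there exists a one-form over $K$ satisfying \eqref{probsetup}. But this is exactly the defining condition for the regular case, contradicting the assumption that $(\Omega, L)$ is exceptional.

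I do not expect a real obstacle in executing this plan; the statement is essentially a bookkeeping consequence of the preceding preparatory results. The substantive work has already been done in Lemma \ref{sl2lem}, whose role is precisely to produce an $\alpha \in K'$ (rather than merely $\alpha \in L'$) at the cost of passing to an at-most quadratic extension and to the $SL(2,\mathbb C)$-quotient. Without that lemma one could not directly invoke Lemma \ref{purex}, so the one point worth double-checking when writing the proof is that the cyclic hypothesis on $G$ is genuinely compatible with $\alpha \in K'$, which is exactly what part (c) of Lemma \ref{sl2lem} guarantees. From there, the cyclic case collapses to the Kummer-theoretic decomposition already carried out in the proof of Lemma \ref{purex}, and nothing further is required.
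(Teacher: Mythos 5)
Your proposal is correct and coincides with the paper's own argument: the paper disposes of the cyclic case precisely by citing Lemma \ref{purex} (with $F=K$, using that in case (E2) the form $\alpha$ is already in $K'$), which forces $\mathcal I\cap K'\neq\emptyset$ and contradicts exceptionality. Nothing further is needed.
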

Case 2 can be reduced to quadratic extensions of $K$:
\begin{proposition}\label{dihedprop}
    Let $(\Omega,\,L)$ represent an exceptional case (E2) with the representation of the Galois group $H$ {isomorphic to $D_N^{SL(2)}$, with $D_N$} a dihedral group. Then the exceptional case (E1) holds with some intermediate field $F$ of $L\supseteq K$.
\end{proposition}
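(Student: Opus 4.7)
The plan is to apply Lemma~\ref{purex} twice, once to the cyclic normal
subgroup of index $2$ inside $D_N^{SL(2)}$ and once to the quotient of
order~$2$. By the setup of case~(E2) together with Lemma~\ref{sl2lem}, I may
assume that $L/K$ is Galois with group $G\cong D_N^{SL(2)}$, and
that~\eqref{probsetup} holds with $\omega\in L'$ and $\alpha\in K'$. The
binary dihedral group $D_N^{SL(2)}$ contains a cyclic subgroup $C$ of order
$2N$ (the preimage of the rotation subgroup of $D_N\subset PGL(2,\mathbb{C})$),
normal of index~$2$. Setting $F:=L^C$, the Galois correspondence gives
$[F:K]=2$ with $F/K$ Galois, and $L/F$ cyclic with group $C$.

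The first application of Lemma~\ref{purex} is to the cyclic extension
$L\supseteq F$: since $\alpha\in K'\subseteq F'$, the lemma yields some
$\omega'\in{\mathcal I}\cap F'$, and Lemma~\ref{intermedfieldlem} supplies a
closed $\alpha'\in F'$ with $d\omega'=\alpha'\wedge\omega'$. The triple
$(F,\omega',\alpha')$ then fulfils every requirement of case~(E1) apart from
the condition $\alpha'\notin K'$.

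To establish the remaining condition I would argue by contradiction: if
$\alpha'\in K'$, then~\eqref{probsetup} holds on the cyclic degree-$2$
extension $F\supseteq K$ with closed form already in $K'$, so a second
application of Lemma~\ref{purex} would produce an element of
${\mathcal I}\cap K'$, contradicting the standing assumption that
$(\Omega,L)$ is exceptional. Hence $\alpha'\in F'\setminus K'$, and
$(F,\omega',\alpha')$ realizes case~(E1). I do not anticipate a serious
obstacle: the argument is dictated by the normal cyclic subgroup $C$ of $G$,
and the only delicate point is verifying that the preliminary reductions
(to $SL(2,\mathbb{C})$ in Lemma~\ref{sl2lem}, and to $\alpha\in K'$ in
Proposition~\ref{summary3prop}(E2)) preserve the hypothesis that the Galois
group is $D_N^{SL(2)}$.
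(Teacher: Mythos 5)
Your proof is correct and follows essentially the same route as the paper's: both identify the cyclic index-two subgroup of $D_N^{SL(2)}$ (the preimage of the rotation subgroup, which the paper writes as $\langle R, -I\rangle$), pass to its degree-two fixed field $F$, and invoke Lemma~\ref{purex} to land $\omega'$ in $\mathcal I\cap F'$. The only difference is that you make explicit the final check that $\alpha'\notin K'$ via a second application of Lemma~\ref{purex} to the cyclic extension $F\supseteq K$ — a detail the paper leaves implicit (it appears only as the remark in the proof of Proposition~\ref{summary3prop} that the case $s=1$, $\alpha\in K'$ is regular) — which is a welcome addition rather than a deviation.
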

\begin{proof}{
        We may assume that the representation $H$ is generated by
        \[
        R=\begin{pmatrix}
            \zeta&0\\0&\zeta^{-1}
        \end{pmatrix} \quad\text{and}\quad S=\begin{pmatrix}
            0&1\\-1&0
        \end{pmatrix}
        \]
        with $\zeta$ a primitive $n^{\rm th}$ root of unity. The cyclic subgroups $\left<R\right>$ and $\left<S\right>$ have trivial intersection when $n$ is odd, and intersection $\left\{I,\,-I\right\}$ when $n$ is even. The subgroup $H_0$  that
           is generated by $R$ and $S^2=-I$, has index two. For even $n$, $H_0=\left<R\right>$ is cyclic; for odd $n$ one has $H_0=\left<R\right>\cup\left\{I,\,-I\right\}\cong \mathbb Z_n\times\mathbb Z_2\cong\mathbb Z_{2n}$, which is also cyclic.\\
       The corresponding subgroup $G_0$ of the Galois group $G$ is cyclic, of index two. Its fixed field $F$ is therefore Galois over $K$, of degree two. By Lemma \ref{purex} there exists a form $\widehat \omega\in \mathcal I\cap F'$; hence $(\Omega, F)$ represents exceptional case (E1).}
\end{proof}
For Case 3 we first ascertain that this will not automatically imply (E1).
\begin{proposition} Let $(\Omega, L)$ represent (E2) with $G=Gal(L:K)$ isomorphic to $A_4^{SL(2)}$ or $A_5^{SL(2)}$. Then $L$ contains no intermediate field  representing (E1).
\end{proposition}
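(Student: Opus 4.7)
My plan is to reduce the statement to a purely group-theoretic assertion about the binary polyhedral groups, exploited through the Galois correspondence. By the description of case (E1) in Proposition~\ref{summary3prop}, any intermediate field $F$ with $K\subseteq F\subseteq L$ that could represent (E1) must in particular satisfy $[F:K]=2$. By the Galois correspondence applied to the finite Galois extension $L\supseteq K$, the existence of such an $F$ is equivalent to the existence of a subgroup $H\leq G$ of index~$2$. Any subgroup of index~$2$ is automatically normal, so it suffices to show that neither $A_4^{SL(2)}$ nor $A_5^{SL(2)}$ admits a surjection onto $\mathbb{Z}/2\mathbb{Z}$; equivalently, that the abelianization $G^{\mathrm{ab}}=G/[G,G]$ has no quotient of order~$2$.

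I would then dispose of the two groups in turn. For the binary icosahedral group $A_5^{SL(2)}\cong SL(2,\mathbb{F}_5)$ this is immediate: $SL(2,\mathbb{F}_q)$ is perfect for $q\geq 4$, so $G^{\mathrm{ab}}$ is trivial and there is no nontrivial homomorphism to $\mathbb{Z}/2\mathbb{Z}$. For the binary tetrahedral group $A_4^{SL(2)}\cong SL(2,\mathbb{F}_3)$, a standard computation (or direct inspection of the subgroup lattice) shows that the commutator subgroup is the quaternion subgroup $Q_8$, with quotient $G/Q_8$ cyclic of order~$3$; since $3$ is odd, there is no surjection onto $\mathbb{Z}/2\mathbb{Z}$. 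In either case no index-$2$ subgroup of $G$ exists, so no degree-$2$ intermediate field of $L/K$ exists, and in particular none can represent (E1).

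The main — and essentially only — point requiring care is to notice that one need not engage with the stronger conditions in (E1) concerning $\omega\in F'$ and $\alpha\in F'\setminus K'$: the Galois-theoretic obstruction already rules out the existence of any degree-$2$ intermediate field inside $L/K$, which is weaker than (E1). Beyond that, the two facts about the abelianizations of the binary tetrahedral and icosahedral groups are classical and can be cited from standard references such as \cite{vdPS}, so no serious technical obstacle arises.
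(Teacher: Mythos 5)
Your proposal is correct, and it follows the same overall reduction as the paper: case (E1) requires a degree-two intermediate field, which by the Galois correspondence is equivalent to an index-two subgroup of $G$, so everything comes down to showing that the binary tetrahedral and binary icosahedral groups have no subgroup of index two. Where you diverge is in how that group-theoretic fact is verified. The paper argues via the projection to $PGL(2,\mathbb C)$ with kernel $\{I,-I\}$: a hypothetical order-$12$ subgroup of $A_4^{SL(2)}$ would project either to a subgroup of order $6$ in $A_4$ (impossible) or to a faithful two-dimensional representation of $A_4$ (also impossible), with the analogous argument for $A_5^{SL(2)}$. You instead observe that an index-two subgroup is the kernel of a surjection onto $\mathbb Z/2\mathbb Z$, which must factor through the abelianization, and then use that $A_5^{SL(2)}\cong SL(2,\mathbb F_5)$ is perfect and that $A_4^{SL(2)}\cong SL(2,\mathbb F_3)$ has abelianization $\mathbb Z/3\mathbb Z$ (commutator subgroup $Q_8$). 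Both verifications rest on classical facts; yours is arguably slightly cleaner in that it handles the two groups uniformly through the abelianization and avoids the case split on whether $-I$ lies in the hypothetical subgroup, while the paper's argument is self-contained at the level of elementary subgroup counting in $A_4$ and $A_5$. Either way the proof is complete.
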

\begin{proof} We observe that there exists no intermediate field $F$ of degree $2$ over $K$; equivalently, that $G$ contains no subgroup of index $2$. We include a proof of this fact for the sake of completeness.
\begin{itemize}
    \item $G=A_4^{SL(2)}$ has order $24$ and projects onto $A_4\subset PGL(2)$, with kernel $\{I,\,-I\}$. Assume that a subgroup $H$ of order $12$ exists. Then its image $\widetilde H$ under the projection would have order $6$ when $-I\in H$, and order $12$ otherwise. The first case would imply that $A_4$ admits a subgroup of order $6$, which is not the case. The second would imply that $A_4$ has a faithful representation in $GL(2)$, which is not the case (see e.g. Sagan \cite{Sagan}). 
    \item For $G=A_5^{SL(2)}$, of order $120$, an obvious variant of the above shows that there is no subgroup of order 60.
\end{itemize}
\end{proof}
\begin{remark}{\em  In the case $G=S_4^{SL(2)}$ one obtains, by analogous reasoning to the above, that the only index two subgroup is $A_4^{SL(2)}$. Beyond this observation, we leave this case open.}
\end{remark}

\subsection{Construction of a class of exceptional cases.}
So far, we have not proven that exceptional cases do exist.
In the present subsection we close this gap.
	
Let $K = \C(x,y,z)$ and $K_0 = \C(x)$. Unless specified otherwise,
``algebraic" always means algebraic over $K$ or $K_0$.
The derivative $h'$ will always refer to $\partial_x h$
and its use implies that $h$ has no dependence on $y$ or $z$.
	
 Let $\mathcal{G}\subseteq SL(2, \C)$ be isomorphic to one of the preimages of the Platonic symmetry groups\footnote{In this subsection we denote groups by calligraphic letters.}:
\begin{equation}\label{platolist}
A_4^{SL(2)}, \quad S_4^{SL(2)},\quad \text{or  }A_5^{SL(2)}, 
\end{equation}
and let
$q(x)$ be a rational function such that the equation
\begin{equation}\label{HGE} 
	y''(x) + q(x) y(x) = 0,
\end{equation}
has differential Galois group $\mathcal{G}$.  (We refer to van der Put and Singer \cite{vdPS} for the Galois theory of linear differential equations.) Let $f_1$ be a
fixed solution of \eqref{HGE} and let $f_i$ be
its images under $\mathcal G$. We let $L_0$ be the minimal Galois
(Picard-Vessiot) extension of $K_0$ which contains
the $f_i$.

We will assume that $q(x)$ is in the standard
form given in Matsuda \cite{Matsu}, pp.~13-14 and p.~18:
\begin{equation}\label{qshape} q(x) =\frac 14\left( \frac{A}{x^2} + \frac{B}{x(x-1)}+ \frac{C}{(x-1)^2}\right),
\end{equation}
with 
\[A = 1-1/m^2,\qquad C = 1-1/n^2,
\qquad A+B+C = 1-1/p^2,\] 
and
where $m$, $n$ and $p$ are some permutation of $(2,3,3)$ (for $A_4^{SL(2)}$), resp.\ $(2,3,4)$ (for $S_4^{SL(2)}$), resp.\ $(2,3,5)$ (for $A_5^{SL(2)}$). 

Denote by $\Omega$ the 2-form associated to the vector field
\begin{equation}\label{SYS}
   \dot x = 1, \qquad
   \dot y = q(x)\,z - 1, \qquad
   \dot z = -y.
\end{equation}
Let $L:=L_0(y,z)$ and note that $L$ is Galois over $K$ with Galois group isomorphic to $\mathcal G$. One verifies that the 1-forms, 
\begin{equation}\label{omegas}
\omega_i = d\big(y\,f_i(x) + z\,f_i'(x)\big) + f_i(x)\,dx \in L',\end{equation}
satisfy
\[\omega_i\wedge\Omega = 0,\qquad d\omega_i=0.\]
Writing $f=f_1$ and $g=f_2$, we have that $f'g-g'f$ is a constant (Wronskian condition).
Scaling by an appropriate element of $\C$, we can assume
\[ f\,g' - g\,f' = 1,\]
which gives
\begin{equation}\label{omwedge}
\omega_1\wedge\omega_2 = \Omega.\end{equation}
As before, the action of $\mathcal{G}$ on the $f_i$ and the $\omega_i$ 
can be represented as
\begin{equation}\label{Grep}
\sigma (\begin{pmatrix} f \\ g \end{pmatrix})
=A_\sigma \begin{pmatrix} f \\ g \end{pmatrix},\qquad
\sigma(\begin{pmatrix} \omega_1 \\ \omega_2 \end{pmatrix})
=A_\sigma \begin{pmatrix} \omega_1 \\ \omega_2 \end{pmatrix},\qquad
\end{equation}
where $\sigma \mapsto A_\sigma$ is a representation of $\mathcal G$
in $SL(2,\C)$.\\

With these notions and preliminaries we state:
\begin{proposition}\label{excprop}
{System \eqref{SYS} is Liouvillian integrable but} $\mathcal{I}\cap K'=\emptyset$.
\end{proposition}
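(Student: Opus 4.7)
The Liouvillian integrability is immediate: since the $f_i$ are algebraic over $\mathbb{C}(x)$, the field $L=L_0(y,z)$ is algebraic (hence Liouvillian) over $K$, and the closed forms $\omega_i\in L'$ satisfy $\omega_i\wedge\Omega=0$, so \eqref{probsetup} holds with $\alpha=0$. For the substantive claim $\mathcal{I}\cap K'=\emptyset$, I would argue by contradiction: suppose $\omega\in\mathcal{I}\cap K'$ is nonzero. By Lemma \ref{intermedfieldlem} applied to $F=K$ we may take $\alpha\in K'$ closed with $d\omega=\alpha\wedge\omega$. Since $\omega_1\wedge\omega_2=\Omega\ne 0$, the pair $\omega_1,\omega_2$ forms an $L$-basis of $\widetilde V$, so $\omega=a\omega_1+b\omega_2$ with $a,b\in L$ not both zero. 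Using $d\omega_i=0$ and the easily verified $d\Omega=0$ ($\mathcal{X}$ has zero divergence), compute
\[
d(a\Omega)=d(\omega\wedge\omega_2)=d\omega\wedge\omega_2=\alpha\wedge\omega\wedge\omega_2=a\alpha\wedge\Omega,
\]
yielding $(da-a\alpha)\wedge\Omega=0$, i.e., $\mathcal{X}(a)=\lambda a$ with $\lambda:=\alpha(\mathcal{X})\in K$; symmetrically $\mathcal{X}(b)=\lambda b$. Because $\lambda$ is $\mathcal{G}$-invariant, $\mathcal{X}(\sigma(a)/a)=0$ for every $\sigma\in\mathcal{G}$, so $\sigma(a)/a$ lies in the field $L^{\mathcal{X}}$ of first integrals of $\mathcal{X}$ in $L$.

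The crucial sub-claim is $L^{\mathcal{X}}=\mathbb{C}$; by the minimal-polynomial argument on the algebraic extension $L/K$ this is equivalent to $K^{\mathcal{X}}=\mathbb{C}$. I would prove the latter by integrating the system explicitly: variation of parameters for $z''+qz=1$ (with Wronskian $fg'-f'g=1$) gives the general trajectory $z=(A-G)f+(B+F)g$, $y=-(A-G)f'-(B+F)g'$, parameterized by $(A,B)\in\mathbb{C}^2$, where $F,G$ are antiderivatives of $f,g$. Any $\phi\in K^{\mathcal{X}}$ restricts on trajectories to a function $c(A,B)$, yielding the identity
\[
\phi(x,y,z)=c\bigl(yg+zg'+G(x),\,-yf-zf'-F(x)\bigr)
\]
in $\mathbb{C}(x,y,z)\subset L(F,G)$. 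Treating $F,G$ as algebraically independent transcendentals over $L$, the only way the right-hand side can be independent of $F,G$ is $\partial_A c=\partial_B c=0$; hence $c$, and therefore $\phi$, is constant.

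Granting the sub-claim, $\sigma(a)=c_\sigma a$ and $\sigma(b)=c_\sigma' b$ for constants $c_\sigma,c_\sigma'\in\mathbb{C}^*$. If $a=0$ (resp.\ $b=0$), imposing $\sigma(\omega)=\omega$ via $\sigma(\omega_i)=\sum_j A_{\sigma,ij}\omega_j$ forces every $A_\sigma$ upper (resp.\ lower) triangular, placing $\mathcal{G}$ in a Borel subgroup of $SL(2,\mathbb{C})$ and hence, for a finite subgroup, cyclic. If both $a,b\ne 0$, then $\mu:=b/a$ satisfies $\mathcal{X}(\mu)=0$, so $\mu\in\mathbb{C}$; substituting into $\sigma(\omega)=\omega$ yields $A_\sigma^T(1,\mu)^T=c_\sigma^{-1}(1,\mu)^T$ for every $\sigma$, exhibiting $(1,\mu)^T$ as a common eigenvector, so by Remark \ref{nocevec} $\mathcal{G}$ is again cyclic. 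Since $\mathcal{G}\in\{A_4^{SL(2)},S_4^{SL(2)},A_5^{SL(2)}\}$ is not cyclic, every case yields a contradiction.

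The main obstacle is establishing the algebraic independence of $F=\int f\,dx$ and $G=\int g\,dx$ over $L$. This reduces to showing neither $f$ nor $g$ admits an algebraic antiderivative over $L_0=\mathbb{C}(x,f,g)$; here the non-abelian (and for $A_5^{SL(2)}$, non-solvable) structure of the Picard-Vessiot Galois group $\mathcal{G}$ of $y''+qy=0$ is essential, precluding such antiderivatives via a Liouville-Rosenlicht-type argument and making $L_0(F,G)/L_0$ a genuine $\mathbb{G}_a\times\mathbb{G}_a$-extension.
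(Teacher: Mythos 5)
Your reduction of the problem to the single sub-claim $L^{\mathcal X}=\mathbb C$ is attractive, and the linear-algebra half of your argument (writing $\omega=a\omega_1+b\omega_2$, deducing $\mathcal X(a)=\lambda a$, $\mathcal X(b)=\lambda b$ with $\lambda=\alpha(\mathcal X)\in K$, and then forcing a common eigenvector of the $A_\sigma$, contradicting Remark \ref{nocevec}) is correct and in fact somewhat cleaner than the corresponding part of the paper's proof. But the sub-claim is where all the difficulty lives, and your proof of it has a genuine gap. You need $F=\int f\,dx$ and $G=\int g\,dx$ to be \emph{algebraically independent} over $L$, and you explicitly defer this to ``a Liouville--Rosenlicht-type argument'' driven by the non-abelian (or non-solvable) structure of $\mathcal G$. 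That deferral does not work as stated: $A_4^{SL(2)}$ and $S_4^{SL(2)}$ are solvable groups, so solvability considerations cannot be the mechanism; and by Kolchin--Ostrowski the algebraic dependence of $F,G$ over $L_0$ is equivalent to some nontrivial combination $c_1F+c_2G$ lying in $L_0$, which is a concrete integration question about the specific $q(x)$, not a formal consequence of the group structure. This is exactly the point where the paper invests its effort: it shows (only) that $F$ is not algebraic, by observing that algebraicity of $F$ would produce a rational solution $\xi$ of the inhomogeneous equation $\xi''+q\xi=1$, which is excluded by a Laurent-expansion/exponent computation at $x=0$ using the explicit form \eqref{qshape} of $q$. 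Your proposal contains no substitute for this computation, and it needs a strictly stronger statement than the paper does.

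A second, smaller issue is the step ``$\phi(x,y,z)=c(A,B)$ and independence of $F,G$ forces $\partial_Ac=\partial_Bc=0$'': here $c$ is only a locally defined analytic function, so ``treating $F,G$ as transcendentals'' is not directly meaningful. This can be repaired by differentiating instead — one gets $d\phi=c_A\,\omega_2-c_B\,\omega_1$ with $c_A=-f'\partial_y\phi+f\partial_z\phi$ and $c_B=-g'\partial_y\phi+g\partial_z\phi$ lying in $L$ — but that lands you precisely in the situation the paper analyses in its second step ($d\phi=r\omega_1+s\omega_2$ with $r,s$ algebraic), and the contradiction is then extracted not from $L^{\mathcal X}=\mathbb C$ but from producing a \emph{second} independent rational first integral and showing (third step, via resultants and a local parameterization of a trajectory) that two independent rational first integrals would force $F$ to be algebraic. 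In short: your Galois-theoretic endgame is sound, but the transcendence input it rests on is asserted rather than proved, and the proposed route to it is not viable in the solvable cases.
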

\begin{proof}[Outline of proof]
We need to exclude the existence of 1-forms $0\not=\omega,\alpha \in K'$ 
	such that 
	\begin{equation}\label{COND} \omega\wedge\Omega = 0,\qquad d\omega=\alpha\wedge\omega,
	\qquad d\alpha=0.\end{equation}
To this end we will show:
\begin{enumerate}[(i)]
	\item The integral $F(x) = \int f(x)\,dx$ is not algebraic.
	\item If \eqref{COND} holds, 
	then there exist two independent rational first integrals
	of \eqref{SYS}.
	\item If \eqref{COND} holds, then the existence of the first integral, 
	\[ \int \omega_1 = y\,f(x) + z\,f'(x) + F(x),\]
	implies that $F(x)$ is algebraic, contradicting (i).
\end{enumerate}
The detailed steps will be carried out in the following subsections.
\end{proof}

\subsubsection{The integral ${F = \int f\,dx}$ is not algebraic.}

Let $F = \int f\,dx$ be an integral of $f(x)$ (any choice
of constant).  If $F$ is algebraic then
we can extend $L_0$ to a minimal Galois extension, $M_0$, of $K_0$ which includes $F$.  

Let $\tilde{\mathcal{G}}$ be the Galois
group of $M_0$.  Any automorphism {$\sigma \in \mathcal{G}$} on $L_0$ will extend to an automorphism {$\tilde{\sigma} \in \tilde{\mathcal{G}}$} of $M_0$. Conversely,
any $K_0$-automorphism of $M_0$ takes $L_0$ to itself and so the
action on $M_0$ restricts to an automorphism on $L_0$.
The derivation $'$ on $K_0$ extends uniquely to $M_0$
and is compatible with the action of $\tilde{\mathcal{G}}$ in the sense that $\sigma(x') = (\sigma(x))'$ for all $\sigma\in \tilde{\mathcal{G}}$ and $x \in M_0$.

Let $\sigma$ be chosen such that $\sigma(f) = a f + b g$ with $b\neq 0$.  This is possible by Remark \ref{nocevec}. Extend {$\sigma$} to an automorphism {$\sigma^*$} of $M_0$ and 
let {$G = (\sigma^*(F) - a F)/b \in M_0$}; then $G' = g$ and $G$ is
algebraic over $K_0$.  

Now consider the
differential equation
\begin{equation} \label{FEQ}
	Y'''(x) + q(x)\,Y'(x) = 0 .
\end{equation}
This has independent solutions $F$, $G$ and $1$.  
We can therefore assume that $M_0$ is the Picard-Vessiot
field associated to \eqref{FEQ}.

{For
$\tau \in \tilde{\mathcal{G}}$, clearly we must have
\[\tau(\begin{pmatrix} F \\ G \\ 1 \end{pmatrix})
=
\begin{pmatrix}
\tilde A_\tau	&   b_\tau \\
{ 0}	&  1 
\end{pmatrix}
\begin{pmatrix} F \\ G \\ 1 \end{pmatrix},
\]
for some $\tilde A_\tau \in GL_2(\C)$ and $ b_\tau \in \C^2$.
Differentiating, we find that 
\[\tau_{|L_0}(\begin{pmatrix} f \\ g \end{pmatrix})
= \tilde A_\tau
\begin{pmatrix} f \\ g \end{pmatrix}.
\]
So, with \eqref{Grep} we must have $\tilde A_\tau=A_{\tau_{|L_0}}$ for all $\tau \in \tilde{\mathcal{G}}$.
This gives a map from $\tilde{\mathcal{G}}$ onto 
 $\mathcal{G}$.
If $\sigma$ maps to the identity in $\mathcal{G}$, then $\tilde A_\sigma = I$
and we have must have $b_\sigma = 0$, since the matrix
\[
\begin{pmatrix}
	I	&  b_\sigma \\
	{0}	&  1 
\end{pmatrix}
\]
has finite order. Therefore the map is an isomorphism, and we conclude
\[
[M_0:K_0]=\left|\tilde{\mathcal G}\right| =\left|\mathcal G\right| =[L_0:K_0],
\]
thus $M_0=L_0$.

Moreover we can replace $F$ and $G$ by $F+k$ and $G+l$, with
$k, l \in \C$ so that $b_\sigma=0$ for all $\sigma$:\footnote{Clearly, adding a constant $k$ to $F$ does not alter the property of being
algebraic or not.}
Choose
\[
\begin{pmatrix} k \\ l \end{pmatrix} = c: = \frac1{|{\mathcal{G}}|} \sum_{\sigma\in {\mathcal{G}}} A_\sigma^{-1}b_\sigma.
\]
From the relation $\tau(\sigma(F)) = \tau\sigma(F)$, we deduce
\[
  A_{\tau\sigma} = A_\sigma A_\tau,\qquad
  b_{\tau\sigma} = A_\sigma b_\tau + b_\sigma.
\]
Hence, for any $\tau \in {\mathcal{G}}$, we have
\[
c = \frac1{|{\mathcal{G}}|} \sum_{\sigma\in {\mathcal{G}}}
A_{\tau\sigma}^{-1}b_{\tau\sigma}
  = \frac1{|{\mathcal{G}}|} \sum_{\sigma\in {\mathcal{G}}}
  A_\tau^{-1}A_\sigma^{-1}(A_\sigma b_\tau + b_\sigma)
= A_\tau^{-1}b_\tau + A_\tau^{-1}c,
\]
giving
\[ b_\tau + c = A_\tau c,\]
for all $\tau \in {\mathcal{G}}$.  Thus,
\[ \sigma(\begin{pmatrix} F \\ G \end{pmatrix}+c)
= A_\sigma \begin{pmatrix} F \\ G \end{pmatrix} + b_\sigma + c
= A_\sigma (\begin{pmatrix} F \\ G \end{pmatrix}+c).
 \]

Now let $\xi$ be given by
\[\xi(x) = 
\begin{vmatrix} 
	F & G \\
	f & g
\end{vmatrix},
\]
hence
\[
\xi''(x) = 
\begin{vmatrix} 
	f & g \\
	f' & g'
\end{vmatrix} +
\begin{vmatrix} 
	F & G \\
	f'' & g''
\end{vmatrix} = 1 - q(x)\, \xi.
\]
Moreover, for all $\sigma \in {\mathcal{G}}$, we have
\[
 ( \sigma( \xi))(x) = 
\sigma\left(\begin{vmatrix} 
   	F & G \\
   	f & g
   \end{vmatrix}\right) =
\left|\begin{pmatrix} 
	F & G \\
	f & g
\end{pmatrix}A_\sigma^T\right| = |A_\sigma^T|\,\xi(x) = \xi(x),
\]
and thus $\xi(x)$ is a rational function.

On the other hand, assume that a rational solution of
\[ \xi''(x) + q(x)\,\xi(x) = 1,\]
exists. Expanding $\xi$ as a Laurent series 
in $x$ about $0$, we get {with \eqref{qshape} that}
$\xi(x) = k x^r + \cdots$ for some $r \in \Z$ and $k\neq 0$.
Comparing coefficients of $x^{r-2}$ yields
\[ r(r-1) + \tfrac14 (1-1/m^2) = 0,\]
which yields a contradiction. Therefore
$F$ cannot be algebraic.
\subsubsection{Rational solutions of \eqref{COND} imply that two
	independent rational first integrals of \eqref{SYS} exist.}

Assume that \eqref{COND} holds.  We therefore have two algebraic
functions, $r$ and $s$, {such that
\[ \omega = r\,\omega_1 + s\,\omega_2 \in K'\]
with $\omega_i$ as in \eqref{omegas}.
This gives, taking differentials,
\[ r\alpha\wedge\omega_1+s\alpha\wedge\omega_2=\alpha\wedge\omega=d\omega = dr\wedge\omega_1 + ds\wedge\omega_2 ,\]
and hence,}
\begin{equation}\label{ONE} (dr - r\alpha)\wedge\omega_1 + (ds - s\alpha)\wedge\omega_2 = 0 .\end{equation}
If $s = 0$ then $\sigma(\omega) = \omega$ implies
\[  \sigma(r) \sigma(\omega_1) = r \omega_1,\]
for all $\sigma \in \mathcal{G}$. However, this would mean { that $\sigma(\omega_1)=c_\sigma\omega_1$ with some character $\sigma\mapsto c_\sigma$ of $\mathcal{G}$, and, with 
\[
\sigma(\omega_1)=(\cdots)dx+\sigma(f_1) dy+\sigma(f_1')dz,
\]
furthermore that $\sigma(f_1) = c_\sigma f_1$ for all $\sigma\in \mathcal{G}$.}
This is impossible by Remark \ref{nocevec}.\\
Therefore, we have $r$ and $s$ both non-zero.
From \eqref{ONE} we can deduce that
\[ \left(\frac{dr}r - \alpha\right)\wedge \Omega = 
    \left(\frac{ds}s - \alpha\right)\wedge \Omega = 0,\]
and hence
\[\left(\frac{dr}r - \frac{ds}s\right)\wedge \Omega = 0.\]
Thus $r/s$ is either a non-trivial first integral
of \eqref{SYS}, or $s = \lambda r$ for some $\lambda \in \C$.\\
In the latter case, we have
\begin{equation}\label{rscomb}
\omega = r (\omega_1 + \lambda \omega_2).
\end{equation}
As before, $\sigma(\omega) = \omega$ implies that
\[
r\begin{pmatrix} 1 & \lambda \end{pmatrix}
\begin{pmatrix} \omega_1 \\ \omega_2 \end{pmatrix} =
\sigma(r)\begin{pmatrix} 1 & \lambda \end{pmatrix} A_\sigma\begin{pmatrix} \omega_1 \\ \omega_2 \end{pmatrix},
\]
and hence $\begin{pmatrix} 1 & \lambda \end{pmatrix}$ is a
common left eigenvector of the $A_\sigma$, {by the linear independence of the $\omega_i$}.  Once again,
this is impossible by Remark \ref{nocevec}.

Hence, we are left in the situation where $r/s$ is
a non-trivial algebraic first integral.  From this,
we have a rational first integral by Lemma \ref{ratfi}.\\
{Let $\phi\in K$ be a rational first integral.} Since $d\phi$ is a
rational 1-form satisfying \eqref{COND} with $\alpha=0$,
we can start afresh with
\[  d\phi = r \omega_1 + s \omega_2,\]
for some new choice of $r$ and $s$, algebraic over $K$.
As above, we see that $r$ and $s$ are both non-zero.
Taking differentials, we get
\[dr \wedge \omega_1 + ds \wedge \omega_2 = 0,\]
and hence $r$ and $s$ satisfy
\[  dr\wedge\Omega = ds\wedge\Omega = 0.\]

{We show that one of $dr\wedge d\phi$ or $ds\wedge d\phi$ is non-zero:}
Suppose that \[d\phi\wedge dr = d\phi\wedge ds = 0;\]
then {we can write 
\begin{equation}\label{rseq}
dr = \tilde{r}d\phi;\quad ds = \tilde{s}\,d\phi
\end{equation}
 for some algebraic elements,
$\tilde{r}$ and $\tilde{s}$.}  Thus,
	\[ d\phi\wedge( \tilde{r}\,\omega_1 + \tilde{s}\,\omega_2) = 0,\]
{and hence
\[\tilde{r}\,\omega_1 + \tilde{s}\,\omega_2 = \ell\, d\phi,\]
thus
\[
(\tilde r-\ell r)\omega_1+(\tilde s-\ell s)\omega_2=0
\]
for some $\ell$, algebraic over $K$.}
This gives \[\tilde{r}-\ell\,r 
= \tilde{s} - \ell\,s = 0,\]
{due to the linear independence of the $\omega_i$ over $L_0$, and hence, $d(r/s) = 0$ from \eqref{rseq}, and $r/s\in \mathbb C$.}  However, we have
already seen from the argument concerning \eqref{rscomb} that this is not possible.

Without loss of generality, we take $dr\wedge d\phi \neq0$, so that $r$ is an algebraic first integral independent
of $\phi$.
We now deduce the existence of a second 
rational first integral, $\psi$, with $d\psi\wedge d\phi\neq0$, with a variant of the proof of Lemma \ref{ratfi}.
Let
\[ r^n + k_{n-1} r^{n-1} + \cdots + k_0 = 0\]
be the minimal equation for $r$ over $K$.  
Acting by $\Omega\wedge d(\cdot)$ on this equation we obtain
\[ \Omega\wedge dk_{n-1}\, r^{n-1} + \cdots + \Omega\wedge dk_0 = 0, \]
and hence we have $\Omega\wedge dk_i = 0$ for all $i$.
If we have $d\phi \wedge dk_j \neq 0$ for some $j$ then
we can take $\psi = k_j$.  If not, then we have
$d\phi \wedge dk_i = 0$ for all $i$.  However, in this case,
by acting on the
minimal equation by $d\phi\wedge d(\cdot)$, we get
\[  (nr^{n-1} + (n-1) k_{n-1} r^{n-2} + \cdots + k_1)\, d\phi\wedge dr = 0 ,\]
so that $nr^{n-1} + (n-1) k_{n-1} r^{n-2} + \cdots + k_1 = 0$,
contradicting  minimality.

\subsubsection{If \eqref{COND} holds, then ${F}$ must be algebraic.}
Assume that $\phi$ and $\psi$ are two independent rational first integrals of \eqref{HGE}, and let $p\in \mathbb C^3$ such that both are defned at $p$ and $d\phi\wedge d\psi\,(p)\not=0$. We may assume that $\phi(p)=\psi(p)=0$. Let $R$ and $S$, respectively be the numerators of $\phi$ and $\psi$, and let $R_0$ resp.\ $S_0$ be the irreducible factors of $R$ resp.\ $S$ that vanish at $p$; these define an algebraic variety $\mathcal V\subset \mathbb C^3$. Thus the local trajectory of \eqref{HGE} through $p$ is contained in $\mathcal V$.
\\
Since $\dot x=1$ in \eqref{HGE}, the trajectory admits a local parameterization by $x$ in the form $(x,\eta(x),\zeta(x))$, where
$\eta$ and $\zeta$ are analytic functions of $x$.\\
Now let $Q_1=Q_1(x,y)$ be the resultant of $R_0$ and $S_0$ with respect to $z$, and  let $Q_2=Q_2(x,z)$ be the resultant of $R_0$ and $S_0$ with respect to $y$. Since $Q_1(x,\eta(x))=0$ for all $x$ in a neighborhood of $x(p)$, one cannot have $Q_1$ independent of $y$, thus $Q_1(x,\eta(x))=0$ shows that $\eta$ is an algebraic function of $x$. By the same token, $Q_2(x,\zeta(x))=0$ shows that $\zeta$ is an algebraic function of $x$.\\
Choosing power series expansions of $F$ and $G$, we can embed $M_0$ into the field of convergent Laurent series, 
$M = \C\{x\}[x^{-1}]$, thus we also may assume $\eta(x)$ and $\zeta(x)$ in $M$.\\
Let the value of the first integral $\Xi(x,y,z) = y\,f(x) + z\,f'(x) + F(x)$ at $p$ be $k\in \mathbb C$.  Then $\Xi = k$ along the trajectory through $p$.  Thus 
\[ \Xi(x,\eta(x),\zeta(x)) = k = \eta(x)\,f(x) + \zeta(x)\,f'(x) + F(x),\]
in $M$ for all $x$ near $x(p)$, and hence $F(x)$ is algebraic over $\C(x)$ since
$\eta(x)$, $\zeta(x)$, $f(x)$ and $f'(x)$ are all algebraic
over $\C(x)$.

\subsection{Dihedral groups and exceptional cases (E1)}

The arguments in the previous subsection also apply to the construction of exceptional cases $(\Omega, L)$ with {Galois groups ${\mathcal G}\cong D_N^{SL(2)}$}, $N\geq 3$: Starting with $q$ defined as in \eqref{qshape}, but now with $(m,n,p)$ some permutation of $(2,2,N)$ (cf. Matsuda \cite{Matsu}), the construction yields a Picard-Vessiot extension with differential Galois group $\mathcal G$. The rest of the argument applies verbatim.\\
With Proposition \ref{dihedprop} one sees furthermore that $L$ contains a degree two subfield $F$ that represents an exceptional case (E1). Thus by an indirect argument the existence of this class of exceptional cases is ascertained.  \\
We also provide an explicit example.
\begin{example}{\em 
    We consider the $D_3$ case and determine an intermediate field of degree two.
    \begin{enumerate}
        \item We specialize the construction for $(m,n,p)=(2,2,3)$, thus \eqref{qshape}
        specializes to
        \begin{equation}\label{qshapespec} q(x) = \frac{3}{16x^2} - \frac{11}{72x(x-1)}+ \frac{3}{16(x-1)^2}.
\end{equation}
By Proposition \ref{excprop} the vector field obtained by specialization of \eqref{SYS} is exceptional. Explicitly the differential equation is given as
\begin{equation}\label{specialex}
\begin{array}{rcl}
  \dot x   & =&1 \\
    \dot y & =& \left(\dfrac{3}{16x^2} - \dfrac{11}{72x(x-1)}+ \dfrac{3}{16(x-1)^2}\right)z-1\\
    \dot z&=&-y.
\end{array}
\end{equation}
From general arguments we know that \eqref{specialex} admits a first integral defined over some degree two extension of $K$.
We proceed with the ingredients of the construction for the dihedral case:
A solution basis for the linear differential equation \eqref{HGE} is obtained using the {\sc{Maple}} symbolic computing environment, \cite{Maple}, and is given by
\[\begin{array}{rcl}
     f_1&:=&\left(x(x-1)\right)^{1/4}\cdot \left(\sqrt{x}+\sqrt{(x-1)}\right)^{1/3}; \\
     f_2&:=&\left(x(x-1)\right)^{1/4}/ \left(\sqrt{x}+\sqrt{(x-1)}\right)^{1/3}.
\end{array}
\]
\\
We note
\[
\dfrac{f_1'}{f_1}=h_1:=\dfrac{1}{12x(x-1)}\left(3\cdot(2x-1)+2\sqrt{x(x-1)}\right),
\]
thus $h_1$ lies in a degree two extension $F:=K(\sqrt{k})$ of the rational function field $K$, by adjoining a square root of $k:=x(x-1)$. 

Now, according to \eqref{omegas} the form 
\[
\begin{array}{rcl}
   \omega_1  & :=& d\big(y\,f_1(x) + z\,f_1'(x)\big) + f_1(x)\,dx\\
     & =&\left(f_1-qzf_1+yf_1'\right)\,dx+f_1\,dy+f_1'\,dz
\end{array}
\]
is a first integral of the 2-form, with $d\,\omega_1=0$. (The same holds for $\omega_2$, mutatis mutandis.)
\item To explicitly obtain forms over the degree two intermediate field $F=K(\sqrt{k})$ (with the nontrivial automorphism $\sigma$ sending $\sqrt{k}$ to $-\sqrt{k}$) we set
\[
\widehat\omega_1:=\dfrac{1}{f_1}\cdot\omega_1=\left(1-qz+yh_1\right)\,dx+dy+h_1\,dz\in F'
\]
and $\widehat\omega_2=\sigma(\widehat\omega_1)$. By construction we have 
\[
d\widehat\omega_i=\widehat\alpha_i\wedge\widehat\omega_i,
\]
with 
\[
\widehat\alpha_1=-\dfrac{f_1'}{f_1} \,dx=-h_1\,dx; \quad \widehat\alpha_2=-\sigma(h_1)\,dx
\]
Thus, by construction the $\widehat\omega_i\in F'$ are first integrals, and $\widehat\omega_1\wedge\widehat\omega_2\in F\cdot\Omega$. (One may also verify this by direct computation.)
    \end{enumerate}
    }
\end{example}


}


\end{document}